\numberwithin{equation}{section}
\newtheorem{theorem}{Theorem}[section]
\newtheorem{lemma}[theorem]{Lemma}
\newtheorem{corollary}[theorem]{Corollary}
\newtheorem*{remark}{Remark}
\newtheorem{definition}[theorem]{Definition}
\newtheorem{example}[theorem]{Example}
\begin{document}

\title[Sharp inequalities and solitons for statistical submersions]{Sharp inequalities and solitons for statistical submersions}
\author{Mohd. Danish Siddiqi}
\address{ Department of Mathematics\\College of Science\\ Jazan University\\
  Jazan\\ Kingdom of Saudi Arabia}
\email{msiddiqi@jazanu.edu.sa}

\author{Aliya Naaz Siddiqui}
\address{Division of Mathematics\\ School of Basic $\&$ Applied Science\\ Galgotias University\\ Greater Noida 203201\\ Uttar Pradesh}
\email{aliyanaazsiddiqui9@gmail.com}

\author{Bang-Yen Chen}
\address{Department of Mathematics\\ Michigan State University\\ 619 Red Cedar Road \\ East Lansing, MI 48824-1027, USA}
\email{chenb@msu.edu}

\begin{abstract}
In this research article, initially, we  prove some sharp inequalities on statistical submersions involving Ricci and scalar curvatures of the statistical manifolds. In addition, we establish the geometrical bearing on statistical submersions in terms of Ricci-Bourguignon soliton. Moreover, we characterize the fibers of a statistical submersion as Ricci-Bourguignon solitons with conformal vector field. Finally, in the particular case when the vertical potential vector field of the Ricci-Bourguignon soliton is of gradient type, we derive a Poisson equation for a statistical submersion.
\end{abstract}

\subjclass{53C25, 53C43}

\keywords{Ricci-Bourguignon soliton, Statistical submersion, Statistical manifold, Einstein manifold}

\maketitle

\section{Introduction}\label{intro}

The concept of submersions between Riemannian manifolds was first described by O'Neill \cite{O} and  Gray \cite{Gr}. In 1983, Watson \cite{W1} popularized the knowledge of Riemannian submersions between almost Hermitian manifolds under the name of almost Hermitian submersions. Afterwards, almost Hermitian submersions have
been investigated between various subclasses of almost Hermitian manifolds.
Also, Riemannian submersions were defined to many subclasses of almost
contact metric manifolds under the name of contact Riemannian submersions \cite{Sa2}.
Most of the studies related to Riemannian, almost Hermitian or contact
Riemannian submersions are available in the books \cite{Fa, Sa1}.

In 1981, the concept of Ricci-Bourguignon flow as an extension of Ricci flow \cite{Hami} has been initiated by J. P. Bourguignon \cite{ref7} based on some unprinted work of Lichnerowicz and a paper of Aubin \cite{ref1}. Ricci-Bourguignon flow  is an intrinsic geometric flow on Riemannian manifolds, whose fixed points are solitons.
Therefore, Ricci-Bourguignon solitons generate self-similar solution to the Ricci-Bourguignon flow \cite{ref7}
\begin{equation}\label{rb1}
\frac{\partial g}{\partial t}=-2(Ric-\rho Rg), \quad g(0)=g_{0},
\end{equation}
where $Ric$ is the Ricci curvature tensor, $R$ is the scalar curvature with respect to the $g$ and $\rho$ is a real non-zero constant.  It should be noticed that for special values of the constant $\rho$ in equation (\ref{rb1}), we obtain the following situations for the tensor $Ric-\rho Rg$ appearing in equation (\ref{rb1}). This PDE system (\ref{rb1}) defined by the evolution equation is of special interest, in particular \cite{ref7}, we have
\begin{enumerate}
\item[(i)] for $\rho=\frac{1}{2}$, the Einstein tensor $Ric-\frac{R}{2}g$ (and Einstein soliton \cite{Bla2}),
\item[(ii)] for $\rho=\frac{1}{n}$, the traceless Ricci tensor $Ric-\frac{R}{n}g$,
\item[(iii)] for $\rho=\frac{1}{2(n-1)}$, the Schouten tensor $Ric-\frac{R}{2(n-1)}g$ (and Schouten soliton \cite{ref7}),
\item[(iv)] for $\rho=0$, the Ricci tensor $Ric$ (and Ricci soliton \cite{Hami}).
\end{enumerate}
In dimension two, the first three tensors are zero, hence the flow is static, and in higher dimensions, the value of $\rho$ is strictly ordered as above, in descending order.

Short time existence and uniqueness for the
solution of this geometric flow have been proved in \cite{ref7}. In fact, for sufficiently small
$t$, the equation (\ref{rb1}) has a unique solution for $\rho < \frac{1}{2(n-1)}$.

On the other hand, quasi-Einstein metrics or Ricci solitons serve as  solutions to Ricci
flow equation \cite{ref8}. This motivates the study of a more general type of Ricci soliton by considering
the Ricci-Bourguignon flow \cite{ref9, Dan3, Dan1}. In fact, a Riemannian manifold of dimension
$n\geq 3$ is said to be a \textit{Ricci-Bourguignon soliton} \cite{ref1} if
\begin{eqnarray}\label{rb2}
\frac{1}{2}\mathcal{L}_{V}g+Ric+(\lambda-\rho R)g=0,
\end{eqnarray}
where $\mathcal{L}_{V}$ denotes the Lie derivative operator along the vector field $V$, $\rho$ is a non-zero constant and $\lambda$ is an arbitrary real constant. Similar to Ricci soliton, a Ricci-Bourguignon soliton $(\mathcal{M},g,V,\lambda)$ is called expanding if
$\lambda > 0$, steady if $\lambda= 0$, and shrinking if $\lambda< 0$.

We say that $(M,g, f,\lambda)$ is a gradient Ricci-Bourguignon soliton if the potential vector field $V$ is the gradient of some smooth function $f$ on $M$. In this case, the soliton equation is
\begin{equation}\label{rb3}
Hess(f) +Ric+(\lambda-{\rho R})g=0,
\end{equation}
where {\em Hess(f)} is the Hessian of $f$.

In 2019, Meri\c{c} et al. introduced and studied Riemannian submersions whose total manifolds admit a Ricci soliton \cite{semsi}. In \cite{aliya2}, M. D. Siddiqi, et al. studied Conformal $\eta$-Ricci solitons on Riemannian submersions under canonical variation. Barndorff-Nielsen and Jupp \cite{Bar} discussed Riemannian submersions from the viewpoint of statistics. Takano \cite{Taka1} put forward the discourse of the statistical submersions of various spaces. In \cite{aliya1}, statistical solitons have been studied by Siddiqui et al. for warped product statistical submanifolds. Recently, Blaga and Chen also studied some properties of gradient solitons in statistical manifolds which is closely related to this topic \cite{Bla3}. Therefore, in this paper we will study statistical submersions whose total space admits  Ricci-Bourguignon solitons and gradient Ricci-Bourguignon solitons.

Furthermore, finding the relationship between  extrinsic and intrinsic invariants of a submanifold has been a very popular problem in the past twenty five years. The study in this direction  was initiated in 1993 by Chen in  \cite{Chen} via his $\delta$-invariants (see also \cite{Book11}). In \cite{mgk}, Meri\c{c} et al. established a series of inequalities for Riemannian submersions. Pursuing the development of this research, we establish in this paper some additional relations between the main intrinsic invariants and the main extrinsic invariants of the vertical and the horizontal spaces of a statistical manifold which admits a statistical submersion.

\section{Background of Statistical Submersions}\label{sect_2}

In this section, we provide the necessary background for understanding statistical submersions.

Let $\mathcal{M}$ and $\mathcal{N}$ be two Riemannian manifolds and  $\psi:\mathcal{M}\longrightarrow\mathcal{N}$ be a Riemannian submersion such that all the fibers are Riemannian submanifolds of $\mathcal{M}$ and $\psi_{\ast}$ preserves the lengths of horizontal vectors (for more details, see \cite{Sa1, Fa}). In 2001, Abe and Hasegawa \cite{Abe} investigated affine submersions with horizontal distribution from statistical manifolds. Furthermore, statistical submersions were also discussed by Takano in \cite{Taka1, Taka2}.

Let $(\mathcal{M},g)$ be a Riemannian manifold with a non-degenerate metric $g$ and let $\nabla$ be a torsion-free affine connection on $\mathcal{M}$. The triplet $(\mathcal{M},\nabla,g)$ is called a {\em statistical manifold} if it satisfies ($\nabla_{U}g)(V,W)=(\nabla_{V}g)(U,W)$ \cite{Taka1}.
For a statistical manifold $(\mathcal{M},\nabla,g)$, we consider a second connection ${\nabla}^{\ast}$ satisfying
\begin{equation}\label{ss1}
Wg(U,V)=g(\nabla_{W}U,V)+g(U,{\nabla}^{\ast}_{W} V),
\end{equation}
for any vector fields $U,V,W$ on $\mathcal{M}$ and call ${\nabla}^{\ast}$ the {\em conjugate} (or {\em dual}) connection of ${\nabla}$ with respect to $g$. The affine connection ${\nabla}^{\ast}$ is torsion-free, $({\nabla}^{\ast}_{U}g)(V,W)=({\nabla}^{\ast}_{V}g)(U,W)$ and obeys ${({\nabla}^{\ast})}^{\ast}=\nabla$. Certainly, $(\mathcal{M},{\nabla}^{\ast},g)$ is also a statistical manifold. For example, every Riemannian manifold $(\mathcal{M},\nabla,g)$ with its Riemannian connection $\nabla$ is a {\em trivial statistical manifold}. In this case, if $Rie$ and $Rie^{\ast}$ stand for the curvature tensors on $\mathcal{M}$ with respect to the affine connection $\nabla$ and its conjugate ${\nabla}^{\ast}$, respectively, then we turn up
\begin{equation}\label{ss2}
g(Rie(U,V)W,X)=-g(W,{Rie}^{\ast}(U,V)X),
\end{equation}
for any vector fields $U,V,W,X$ on $\mathcal{M}$ \cite{Taka1}.

Let $\psi: ({\mathcal{M}}^{p},g)\longrightarrow (\mathcal{N}^{q},\hat{g})$ be a Riemannian submersion between two Riemannian manifolds $({\mathcal{M}}^{p},g)$ and  $(\mathcal{N}^{q},\hat{g})$. For any point $ x\in\mathcal{N}$, the Riemannian submanifold $\psi^{-1}(x)$ is $(p-q)$-dimensional with the induced metric $\bar{g}$ called a {\em fiber} and denoted by $\bar{\mathcal{M}}$. In the tangent bundle $T\mathcal{M}$ of $\mathcal{M}$, the vertical and horizontal distributions are denoted by $\mathcal{V}(\mathcal{M})$ and $\mathcal{H}(\mathcal{M})$, respectively. So, we have
\begin{equation}\nonumber
T_{r}(\mathcal{M})=\mathcal{V}_{r}(\mathcal{M})\oplus\mathcal{H}_{r}(\mathcal{M}),\;\; r\in \mathcal{M}.
\end{equation}

We call a vector field $U$ on $\mathcal{M}$ \textit{projectable} if there exists a vector field $U_{\ast}$ on $\mathcal{N}$ such that $\psi_{\ast}(U_{r})=U_{{\ast}\psi({r})}$, for each $r\in \mathcal{M}$. In this case, $U$ and $U_{\ast}$ are called \textit{$\psi$-related}. Also, a vector field $U$ on $\mathcal{H}(\mathcal{M})$ is called {\em basic} if it is projectable \cite{O}.

If $U$ and $V$ are basic vector fields, $\psi$-related to $\hat{U}$ and $\hat{V}$, we have the following facts
\begin{enumerate}
\item{} $g(U,V)=\hat{g}(\hat{U},\hat{V})\circ\psi$,
\item{} $\mathcal{H}[U,V]$ is a basic vector field $\psi$-related to $[X_{\ast},Y_{\ast}]$,
\item{} $\mathcal{H}(\nabla_{U}V)$ is a basic vector field $\psi$-related to $\hat{\nabla}_{\hat{U}}\hat{V}$.
\end{enumerate}

The geometry of Riemannian
submersions is characterized by O'Neill's tensors $\mathcal{T}$ and
$\mathcal{A}$, defined as follows \cite{O}
\begin{equation}\label{e1}
\mathcal{T}_{E}F=\mathcal{V}\nabla_{\mathcal{V}E}\mathcal{H}F+\mathcal{H}\nabla_{\mathcal{V}E}\mathcal{V}F,
\end{equation}
\begin{equation}\label{e2}
\mathcal{A}_{E}F=\mathcal{V}\nabla_{\mathcal{H}E}\mathcal{H}F+\mathcal{H}\nabla_{\mathcal{H}E}\mathcal{V}F,
\end{equation}
for any vector fields $E$, $F$ on $\mathcal{M}$. It is easy to see
that $\mathcal{T}_{E}$ and $\mathcal{A}_{E}$ are skew-symmetric
operators with respect to $g$ on the tangent bundle of $\mathcal{M}$ reversing the vertical and
the horizontal distributions. We summarize the properties of the
tensor fields $\mathcal{T}$ and $\mathcal{A}$. Let $V,W$ be
vertical and $X,Y$ be horizontal vector fields on $\mathcal{M}$. Then, we
have
\begin{equation}\label{e3}
\mathcal{T}_{V}W=\mathcal{T}_{W}V,
\end{equation}
\begin{equation}\label{e4}
\mathcal{A}_{X}Y=-\mathcal{A}_{Y}X=\frac{1}{2}\mathcal{V}[X,Y].
\end{equation}

Let $(\mathcal{M},\nabla,g)$ be a statistical manifold and $\psi:\mathcal{M}\longrightarrow\mathcal{N}$ be a Riemannian submersion. We denote the dual affine connections on any fiber $\bar{\mathcal{M}}$ of $\psi$ by ${\bar{\nabla}}$ and ${\bar{\nabla}}^{\ast}$. It can be simply observed that ${\bar{\nabla}}$ and ${\bar{\nabla}}^{\ast}$ are torsion-free and conjugate to each other with respect to $g$.



A submersion $\psi:(\mathcal{M},\nabla, g)\longrightarrow(\mathcal{N},\hat{\nabla},\hat{g})$ between two statistical manifolds is called a {\em statistical submersion} if $\psi$ satisfies $\psi_{\ast}(\nabla_{X}Y)_{r}=(\hat{{\nabla}}_{\psi_{\ast} (X)} {\psi_{\ast}(Y)})_{\psi(r)}$ for basic vector field $X,Y$ on $\mathcal{M}$ and $r\in{\mathcal{M}}$. Changing $\nabla$ for $\nabla^{\ast}$ in the above expressions, we define $\mathcal{T}^{\ast}$ and $\mathcal{A}^{\ast}$ \cite{Taka1}. $\mathcal{A}$ and $\mathcal{A}^{\ast}$ are equal to zero if and only if $\mathcal{H}(\mathcal{M})$ is integrable with respect to $\nabla$ and $\nabla^{\ast}$, respectively. For $X,Y\in \Gamma\mathcal{H}(\mathcal{M})$ and $V,W\in \Gamma\mathcal{V}(\mathcal{M})$, we turn up
\begin{equation}\label{ss3}
g(\mathcal{T}_{V}W,X)=-g(W,\mathcal{T}^{\ast}_{V}X),\quad \quad g(\mathcal{A}_{X}Y, V)=-g(Y,\mathcal{A}^{\ast}_{X}V).
\end{equation}

\section{Properties of Statistical Submersions}\label{sect_3}

In this section, we will discuss some useful  properties of statistical submersions proposed by Takano \cite{Taka1}. In the same paper, Takano provided the following lemmas which are useful for this study. Therefore, for a statistical submersion $\psi:(\mathcal{M},\nabla, g)\longrightarrow(\mathcal{N},\hat{\nabla},\hat{g})$, we have \cite{O, Taka1}
\begin{lemma}\label{L1}
  \cite{Taka1} If $X$ and $Y$ are horizontal vector fields, then $\mathcal{A}_{X}Y=-\mathcal{A}^{\ast}_{Y}X$.
\end{lemma}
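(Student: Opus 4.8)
The plan is to reduce the identity to a computation on basic vector fields and then verify it by testing against vertical fields, using only the duality relation \eqref{ss1}, the torsion-freeness of $\nabla$ and $\nabla^{\ast}$, and the basic-field relation $g(U,V)=\hat{g}(\hat{U},\hat{V})\circ\psi$. First I would observe that $\mathcal{A}$ and $\mathcal{A}^{\ast}$ are genuine $(1,2)$-tensors: replacing $F$ by $fF$ in \eqref{e2} produces only the extra terms $(\mathcal{H}E\,f)\,\mathcal{V}(\mathcal{H}F)$ and $(\mathcal{H}E\,f)\,\mathcal{H}(\mathcal{V}F)$, both of which vanish, so $\mathcal{A}_{E}(fF)=f\mathcal{A}_{E}F$; tensoriality in the first slot is immediate. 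Since basic vector fields span $\mathcal{H}(\mathcal{M})$ pointwise, it suffices to establish $\mathcal{A}_{X}Y=-\mathcal{A}^{\ast}_{Y}X$ for basic $X,Y$.

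For such $X,Y$ one has $\mathcal{H}X=X$, $\mathcal{V}X=0$ (and likewise for $Y$), so \eqref{e2} gives $\mathcal{A}_{X}Y=\mathcal{V}\nabla_{X}Y$ and $\mathcal{A}^{\ast}_{Y}X=\mathcal{V}\nabla^{\ast}_{Y}X$. Since both sides are vertical, it is enough to show that $g(\nabla_{X}Y,V)+g(\nabla^{\ast}_{Y}X,V)=0$ for every vertical $V$. Applying \eqref{ss1} with $g(Y,V)=0$ yields $g(\nabla_{X}Y,V)=-g(Y,\nabla^{\ast}_{X}V)$, and applying the conjugate form of \eqref{ss1} (valid since $(\nabla^{\ast})^{\ast}=\nabla$) with $g(X,V)=0$ yields $g(\nabla^{\ast}_{Y}X,V)=-g(X,\nabla_{Y}V)$.

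Next I would bring in torsion-freeness. For a basic $X$ and vertical $V$ the bracket $[X,V]$ is $\psi$-related to $[\hat{X},0]=0$, hence vertical; combined with $\nabla^{\ast}_{X}V-\nabla^{\ast}_{V}X=[X,V]$ this gives $g(Y,\nabla^{\ast}_{X}V)=g(Y,\nabla^{\ast}_{V}X)$ after projecting onto the horizontal field $Y$, and in the same way $g(X,\nabla_{Y}V)=g(X,\nabla_{V}Y)$. The sum in question therefore becomes $-\big(g(Y,\nabla^{\ast}_{V}X)+g(X,\nabla_{V}Y)\big)$. Invoking the conjugate form of \eqref{ss1} once more, now with base point $V$, gives $g(\nabla^{\ast}_{V}X,Y)+g(X,\nabla_{V}Y)=V g(X,Y)$, so the sum equals $-V g(X,Y)$.

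Finally, because $X,Y$ are basic, the relation $g(X,Y)=\hat{g}(\hat{X},\hat{Y})\circ\psi$ shows that $g(X,Y)$ is constant along each fiber; as $V$ is vertical, $V g(X,Y)=0$. Hence $g(\mathcal{A}_{X}Y+\mathcal{A}^{\ast}_{Y}X,V)=0$ for all vertical $V$, and the claim follows by nondegeneracy on the vertical distribution. I expect the main obstacle to be purely bookkeeping: keeping straight which of $\nabla$ and $\nabla^{\ast}$ appears in each application of \eqref{ss1}, and confirming at each step that the correction terms $[X,V]$, $[Y,V]$, and $V g(X,Y)$ genuinely drop out. A slicker alternative would be to write $\nabla+\nabla^{\ast}=2\nabla^{0}$ for the Levi-Civita connection $\nabla^{0}$ and reduce to O'Neill's classical antisymmetry $\mathcal{A}^{0}_{X}Y=-\mathcal{A}^{0}_{Y}X$, but this imports facts not recorded in the excerpt, so I would prefer the self-contained computation above.
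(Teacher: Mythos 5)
Your proof is correct, and since the paper states this lemma only as a quotation from \cite{Taka1} with no internal proof, your self-contained derivation is exactly what is missing; it is also the standard argument. Every step checks: tensoriality of $\mathcal{A}$ and $\mathcal{A}^{\ast}$ in both slots legitimizes the reduction to basic fields; for basic $X$ and vertical $V$ the bracket $[X,V]$ is $\psi$-related to $[\hat{X},0]=0$ and hence vertical, so the torsion-freeness substitutions $g(Y,\nabla^{\ast}_{X}V)=g(Y,\nabla^{\ast}_{V}X)$ and $g(X,\nabla_{Y}V)=g(X,\nabla_{V}Y)$ are valid; the two uses of (\ref{ss1}) and its conjugate form (legitimate because $({\nabla}^{\ast})^{\ast}=\nabla$) are correctly matched; and $Vg(X,Y)=0$ follows from $g(X,Y)=\hat{g}(\hat{X},\hat{Y})\circ\psi$. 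Nondegeneracy of $g$ on the vertical distribution, needed at the last step, is guaranteed since the fibers are Riemannian submanifolds. Two remarks worth making. First, your argument never invokes the statistical-submersion compatibility condition $\psi_{\ast}(\nabla_{X}Y)=\hat{\nabla}_{\psi_{\ast}X}\psi_{\ast}Y$; it uses only duality, torsion-freeness of $\nabla$ and $\nabla^{\ast}$, and the Riemannian-submersion structure, so the lemma holds in that slightly greater generality. Second, the alternative you set aside actually does close, though not immediately: O'Neill's antisymmetry $\mathcal{A}^{0}_{X}Y=-\mathcal{A}^{0}_{Y}X$ for $\nabla^{0}=\frac{1}{2}(\nabla+\nabla^{\ast})$ gives only the symmetrized identity $\mathcal{A}_{X}Y+\mathcal{A}^{\ast}_{X}Y=-(\mathcal{A}_{Y}X+\mathcal{A}^{\ast}_{Y}X)$, but combining it with $\mathcal{A}_{X}Y-\mathcal{A}_{Y}X=\mathcal{V}[X,Y]=\mathcal{A}^{\ast}_{X}Y-\mathcal{A}^{\ast}_{Y}X$ (torsion-freeness of both connections) eliminates to $\mathcal{A}_{X}Y=-\mathcal{A}^{\ast}_{Y}X$; still, your direct computation is preferable as it imports nothing beyond the relations recorded in Section \ref{sect_2}.
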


\begin{lemma}
\cite{Taka1} For $X,Y\in \Gamma\mathcal{H}(\mathcal{M})$ and $V,W\in \Gamma\mathcal{V}(\mathcal{M})$, we have
\begin{equation}\label{e5}
\nabla_{V}W=\mathcal{T}_{V}W+\bar{\nabla}_{V}W,\quad\quad {\nabla}^{\ast}_{V}W=\mathcal{T}^{\ast}_{V}W+{\bar{\nabla}}^{\ast}_{V}W,
\end{equation}
\begin{equation}\label{e6}
\nabla_{V}X=\mathcal{T}_{V}X+\mathcal{H}\nabla_{V}X,\quad\quad {\nabla}^{\ast}_{V}X=\mathcal{T}^{\ast}_{V}X+\mathcal{H}{\nabla^{\ast}}_{V}X,
\end{equation}
\begin{equation}\label{e7}
\nabla_{X}V=\mathcal{A}_{X}V+\mathcal{V}\nabla_{X}V,\quad\quad {\nabla}^{\ast}_{X}V=\mathcal{A}^{\ast}_{X}V+\mathcal{V}{\nabla}^{\ast}_{X}V,
\end{equation}
\begin{equation}\label{e8}
\nabla_{X}Y=\mathcal{H}\nabla_{X}Y+\mathcal{A}_{X}Y,\quad\quad {\nabla}^{\ast}_{X}Y=\mathcal{H}{\nabla}^{\ast}_{X}Y+\mathcal{A}^{\ast}_{X}Y.
\end{equation}
Furthermore, if $X$ is basic, then $\mathcal{H}\nabla_{V}X=\mathcal{A}_{X}V$ and $\mathcal{H}{\nabla}^{\ast}_{V}X=\mathcal{A}^{\ast}_{X}V$.
\end{lemma}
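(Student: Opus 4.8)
The plan is to treat this lemma as the statistical analogue of the classical O'Neill decomposition, in which each of the four covariant derivatives is split into its vertical and horizontal parts and then matched against the definitions \eqref{e1}--\eqref{e2} of $\mathcal{T}$ and $\mathcal{A}$. The organizing observation is that $\mathcal{T}_{E}F$ retains only the vertical projection of $E$ while swapping the projections of $F$, and $\mathcal{A}_{E}F$ retains only the horizontal projection of $E$ while doing the same to $F$; consequently, once $E$ and $F$ are specialized to be purely vertical or purely horizontal, exactly one of the two summands in each definition survives. All eight identities then reduce to reassembling $\nabla_{E}F=\mathcal{H}\nabla_{E}F+\mathcal{V}\nabla_{E}F$ after naming the surviving summands.

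First I would establish \eqref{e5}. Using the Gauss-type identification $\bar{\nabla}_{V}W=\mathcal{V}\nabla_{V}W$ of the induced fiber connection with the vertical projection of $\nabla$, together with $\mathcal{T}_{V}W=\mathcal{H}\nabla_{V}W$ for vertical $V,W$ (here $\mathcal{H}W=0$ and $\mathcal{V}V=V$ in \eqref{e1}), the claim $\nabla_{V}W=\mathcal{T}_{V}W+\bar{\nabla}_{V}W$ is precisely the reassembly $\nabla_{V}W=\mathcal{H}\nabla_{V}W+\mathcal{V}\nabla_{V}W$. The same bookkeeping disposes of \eqref{e6}--\eqref{e8}: for vertical $V$ and horizontal $X$ one finds $\mathcal{T}_{V}X=\mathcal{V}\nabla_{V}X$, so $\nabla_{V}X=\mathcal{T}_{V}X+\mathcal{H}\nabla_{V}X$; for horizontal $X$ and vertical $V$ one finds $\mathcal{A}_{X}V=\mathcal{H}\nabla_{X}V$, so $\nabla_{X}V=\mathcal{A}_{X}V+\mathcal{V}\nabla_{X}V$; and for horizontal $X,Y$ one finds $\mathcal{A}_{X}Y=\mathcal{V}\nabla_{X}Y$, so $\nabla_{X}Y=\mathcal{H}\nabla_{X}Y+\mathcal{A}_{X}Y$.

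The conjugate formulae are obtained by rerunning the identical argument with $\nabla$ replaced by $\nabla^{\ast}$, the fiber connection $\bar{\nabla}$ by $\bar{\nabla}^{\ast}$, and $\mathcal{T},\mathcal{A}$ by their starred counterparts; no fresh input is required, since $\nabla^{\ast}$ is again torsion-free and restricts to $\bar{\nabla}^{\ast}$ on each fiber. For the final clause I would use the bracket identity that, when $X$ is basic and $V$ is vertical, $[V,X]$ is vertical. Since $\nabla$ is torsion-free we have $\nabla_{V}X-\nabla_{X}V=[V,X]$, and taking horizontal parts annihilates the right-hand side, giving $\mathcal{H}\nabla_{V}X=\mathcal{H}\nabla_{X}V=\mathcal{A}_{X}V$; the starred version is identical because the Lie bracket is independent of the connection.

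I expect the only genuine subtlety to be the verticality of $[V,X]$ for basic $X$, which is the one place where the submersion hypothesis enters rather than the mere orthogonal splitting of $T\mathcal{M}$; everything else is projection bookkeeping that is formally insensitive to whether the connection is metric, so the statistical setting costs nothing beyond carrying out each step a second time for $\nabla^{\ast}$.
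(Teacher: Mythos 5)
Your proposal is correct and coincides with the standard argument behind this lemma: the paper itself states it without proof (citing Takano), and the proof there is exactly your projection bookkeeping --- each of \eqref{e5}--\eqref{e8} is the decomposition $\nabla_{E}F=\mathcal{H}\nabla_{E}F+\mathcal{V}\nabla_{E}F$ with the single surviving summand of \eqref{e1} or \eqref{e2} identified as $\mathcal{T}$ or $\mathcal{A}$, and with $\bar{\nabla}_{V}W=\mathcal{V}\nabla_{V}W$ taken as the Gauss-type definition of the fiber connection. Your treatment of the final clause is also the classical O'Neill argument, correctly adapted: torsion-freeness of $\nabla$ (and of $\nabla^{\ast}$, which the paper records) gives $\mathcal{H}\nabla_{V}X=\mathcal{H}\nabla_{X}V=\mathcal{A}_{X}V$ once one knows $[V,X]$ is vertical for basic $X$, which is indeed the only point where the submersion structure, rather than the splitting of $T\mathcal{M}$, is used.
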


Let $Rie$ (resp. $Rie^{*}$) be the curvature tensor with respect to $\nabla$ (resp. $\nabla^{*}$) of $\mathcal{M}$. The curvature tensor with respect to the induced affine connection $\bar{\nabla}$ (resp. ${\bar{\nabla}^{*}}$) of each fiber are given by $\bar{Rie}$ (resp. $\bar{Rie}^{\ast}$). Moreover,  $\hat{Rie}(X, Y)Z$ (resp. $\hat{Rie}^{*}(X, Y)Z$ ) are horizontal vector fields such that
$$\psi_{\ast}(\hat{Rie}(X, Y)Z) = \hat{Rie}(\psi_{\ast}X, \psi_{\ast}Y)\psi_{\ast}Z$$$$\text{(resp. } \; \psi_{\ast}(\hat{Rie}^{\ast}(X, Y)Z) = \hat{Rie}^{\ast}(\psi_{\ast}X, \psi_{\ast}Y)\psi_{\ast}Z\; \text{)}$$
at each $p \in M$, where $\hat{Rie}$ (resp. $\hat{Rie}^{\ast}$) denotes the curvature tensor with respect to the affine connection $\hat{\nabla}$ (resp. $\hat{\nabla}^{\ast}$). Then we have the following theorem \cite{Taka1}

\begin{theorem}\label{TS1}
If $\psi:(\mathcal{M},\nabla, g)\longrightarrow(\mathcal{N},\hat{\nabla},\hat{g})$ is a statistical submersion, then for $E,F,G,K\in \Gamma\mathcal{V}(\mathcal{M})$ and $X,Y,Z,W\in \Gamma\mathcal{H}(\mathcal{M})$, we have
\begin{align}\label{c1}
Rie(E,F,G,K)= & \bar{Rie}(E,F,G,K)+g(\mathcal{T}_{E}G,\mathcal{T}^{\ast}_{F}K)-g(\mathcal{T}_{F}G,\mathcal{T}^{\ast}_{E}K),\\
\label{c2}
Rie(X,Y,Z,W)= & \hat{Rie}(X,Y,Z,W)+g((\mathcal{A}_{X}+\mathcal{A}^{\ast}_{X})Y,\mathcal{A}^{\ast}_{Z}W)\\
\nonumber
& -g(\mathcal{A}_{Y}Z,\mathcal{A}^{\ast}_{X}W)+g(\mathcal{A}_{X}Z,\mathcal{A}^{\ast}_{Y}W).
\end{align}
\end{theorem}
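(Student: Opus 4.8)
The plan is to expand the $(0,4)$ curvature tensor directly from its definition, $Rie(E,F,G,K)=g\bigl(\nabla_E\nabla_F G-\nabla_F\nabla_E G-\nabla_{[E,F]}G,\,K\bigr)$, and to feed in the Gauss-type splitting formulas \eqref{e5}--\eqref{e8} at each differentiation, always recording whether each resulting piece is vertical or horizontal. Since the last slot is filled by a field ($K$ or $W$) lying entirely in one distribution, every component lying in the complementary distribution is annihilated by $g(\cdot,\cdot)$, so only a few terms survive; the remaining task is to rewrite those survivors through the conjugacy relations \eqref{ss1} and \eqref{ss3}.

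For the vertical identity \eqref{c1}, all four arguments are vertical. First I would use \eqref{e5} to write $\nabla_F G=\mathcal{T}_F G+\bar\nabla_F G$, where $\mathcal{T}_F G$ is horizontal and $\bar\nabla_F G$ is vertical, and then differentiate once more, splitting the horizontal part by \eqref{e6} and the vertical part by \eqref{e5}. Pairing against the vertical field $K$ kills every horizontal remainder, leaving exactly $g(\mathcal{T}_E\mathcal{T}_F G,K)+g(\bar\nabla_E\bar\nabla_F G,K)$; the analogous expansions of $\nabla_F\nabla_E G$ and, using that $[E,F]$ is again vertical by integrability of the fibers, of $\nabla_{[E,F]}G$, supply the matching pieces. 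The pure-fiber terms assemble into $\bar{Rie}(E,F,G,K)$, while for the O'Neill terms the dual skew-symmetry $g(\mathcal{T}_E H,K)=-g(H,\mathcal{T}^{\ast}_E K)$ for horizontal $H$, which follows from \eqref{ss1} (cf. \eqref{ss3}), converts $g(\mathcal{T}_E\mathcal{T}_F G,K)-g(\mathcal{T}_F\mathcal{T}_E G,K)$ into $g(\mathcal{T}_E G,\mathcal{T}^{\ast}_F K)-g(\mathcal{T}_F G,\mathcal{T}^{\ast}_E K)$, which is precisely \eqref{c1}.

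For the horizontal identity \eqref{c2} I would run the same scheme with $X,Y,Z,W$ horizontal, now invoking \eqref{e8} to split $\nabla_Y Z=\mathcal{H}\nabla_Y Z+\mathcal{A}_Y Z$ and \eqref{e7} when differentiating the vertical part $\mathcal{A}_Y Z$. Pairing against the horizontal $W$ retains the purely horizontal iterated term $g(\mathcal{H}\nabla_X\mathcal{H}\nabla_Y Z,W)$ together with the mixed term $g(\mathcal{A}_X\mathcal{A}_Y Z,W)$, and the commutator $\nabla_{[X,Y]}Z$ must be resolved via $[X,Y]=\mathcal{H}[X,Y]+\mathcal{V}[X,Y]$. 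The horizontal iterated terms, together with $\mathcal{H}\nabla_{\mathcal{H}[X,Y]}Z$, project under $\psi_{\ast}$ — thanks to the defining property $\psi_{\ast}(\nabla_X Y)=\hat\nabla_{\psi_{\ast}X}\psi_{\ast}Y$ of a statistical submersion and the basic-field identity $g(U,V)=\hat g(\hat U,\hat V)\circ\psi$ — to the base curvature $\hat{Rie}(X,Y,Z,W)$.

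I expect the main obstacle to be the horizontal case, for two reasons. Unlike the classical Riemannian situation, the tensor $\mathcal{A}$ is not skew in its two horizontal arguments: differentiating torsion-freeness yields $\mathcal{A}_X Y-\mathcal{A}_Y X=\mathcal{V}[X,Y]$ rather than $2\mathcal{A}_X Y=\mathcal{V}[X,Y]$, so the contribution of $\mathcal{V}[X,Y]$ to $\nabla_{[X,Y]}Z$ cannot be absorbed into the $\mathcal{A}\mathcal{A}$-terms as cleanly as before. Correctly assembling $g(\mathcal{A}_X\mathcal{A}_Y Z,W)$, $-g(\mathcal{A}_Y\mathcal{A}_X Z,W)$, and the $\mathcal{V}[X,Y]$ contribution into the asymmetric combination $g((\mathcal{A}_X+\mathcal{A}^{\ast}_X)Y,\mathcal{A}^{\ast}_Z W)-g(\mathcal{A}_Y Z,\mathcal{A}^{\ast}_X W)+g(\mathcal{A}_X Z,\mathcal{A}^{\ast}_Y W)$ will require repeated use of \eqref{ss3} to trade $\mathcal{A}$ for $\mathcal{A}^{\ast}$ and of Lemma \ref{L1}, namely $\mathcal{A}_X Y=-\mathcal{A}^{\ast}_Y X$, to reorganize the horizontal arguments; pinning down all the signs and achieving the $\mathcal{A}_X+\mathcal{A}^{\ast}_X$ grouping is the delicate step.
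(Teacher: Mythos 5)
Your proposal is correct, and it is essentially the expected argument: the paper itself gives no proof of Theorem \ref{TS1} (it is imported from Takano \cite{Taka1}), and the standard derivation there is exactly your direct expansion of $Rie$ with the splittings \eqref{e5}--\eqref{e8}, projection onto the distribution of the last slot, and conversion via \eqref{ss1}/\eqref{ss3}. Your sign bookkeeping checks out at the delicate spot you flagged: $\mathcal{V}[X,Y]=\mathcal{A}_XY-\mathcal{A}_YX=(\mathcal{A}_X+\mathcal{A}^{\ast}_X)Y$ by Lemma \ref{L1}, and $-g(\mathcal{A}_Z\mathcal{V}[X,Y],W)=g((\mathcal{A}_X+\mathcal{A}^{\ast}_X)Y,\mathcal{A}^{\ast}_ZW)$ (with $Z,W$ taken basic, legitimate by tensoriality) reproduces \eqref{c2} exactly, just as the dual-skewness of $\mathcal{T}$ against $\mathcal{T}^{\ast}$ reproduces \eqref{c1}.
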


Now, we get
\begin{theorem}\label{p3}
The Ricci curvature tensors of $(\mathcal{M},\nabla, g)$, $(\mathcal{N}, \hat{\nabla},\hat{g})$ and of any fiber of $\psi$ denoted by $Ric$, $\hat{Ric}$ and $\bar{Ric}$, respectively, satisfy
\begin{align}\label{c3}
Ric(E,F)=&\bar{Ric}(E,F)-g(\mathcal{T}_{E}F,N^{\ast})+\sum^{n}_{i=1}g((\nabla_{X_{i}}\mathcal{T})({E},F),X_{i}) \\
\nonumber
&+g(\mathcal{A}_{X_{i}}E,\mathcal{A}^{\ast}_{X_{i}}F)-\sum^{m}_{j=1}g((\nabla^{\ast}_{E_{j}}\mathcal{A})({X_{i}},X),V),\\
\label{c4}
Ric(X,Y)=&\hat{Ric}(X,Y)+g(\nabla^{\ast}_{X}N^{\ast},Y)-\sum^{m}_{j=1}g(\mathcal{T}_{E_{j}}{X},\mathcal{T}_{E_{j}}Y)\\
\nonumber
&+\sum^{n}_{i=1}g(\mathcal{A}_{X}{X_{i}},\mathcal{A}^{\ast}_{Y}{X_{i}})+\sum^{n}_{i=1}g(\nabla_{X_{i}}\mathcal{A})({X_{i}}, X),Y)\\
\nonumber
&+\sum^{n}_{i=1}g(\mathcal{A}_{X_i}X_{i}, \mathcal A_{X}Y)-g(\mathcal{A}^{\ast}_{X}X_{i},\mathcal{A}^{\ast}_{X}X_{i}),
\end{align}
for any $E,F\in\Gamma \mathcal{V}(\mathcal{M})$ and $X,Y\in\Gamma \mathcal{H}(\mathcal{M})$, where $\left\{X_{i}\right\}_{1\leq i\leq n}$ and $\left\{E_{j}\right\}_{1\leq i\leq m}$ are orthonormal basis of $\mathcal{H} (horizontal)$ and $\mathcal{V} (vertical)$ distributions, respectively.
\end{theorem}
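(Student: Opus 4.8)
The plan is to obtain both identities by tracing the curvature tensor of $(\mathcal{M},\nabla,g)$ over an adapted orthonormal frame $\{X_1,\dots,X_n,E_1,\dots,E_m\}$ of $\mathcal{H}(\mathcal{M})\oplus\mathcal{V}(\mathcal{M})$, and then splitting each trace into its horizontal and vertical portions so that Theorem~\ref{TS1} and the O'Neill-type relations of the preceding lemmas can be applied term by term. Throughout, I write $N^{\ast}=\sum_{j}\mathcal{T}^{\ast}_{E_j}E_j$ for the dual mean curvature vector field of the fibre.

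First I would compute $Ric(E,F)$ for $E,F\in\Gamma\mathcal{V}(\mathcal{M})$. Writing
\[
Ric(E,F)=\sum_{j=1}^{m}Rie(E_j,E,F,E_j)+\sum_{i=1}^{n}Rie(X_i,E,F,X_i),
\]
the first sum is a purely vertical trace, so \eqref{c1} replaces $Rie$ by $\bar{Rie}$ plus the quadratic $\mathcal{T},\mathcal{T}^{\ast}$ terms; tracing $\bar{Rie}$ yields $\bar{Ric}(E,F)$, while the quadratic part collapses, via the conjugate pairing \eqref{ss3} and the definition of $N^{\ast}$, into $-g(\mathcal{T}_{E}F,N^{\ast})$. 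The remaining mixed sum over $X_i$ must be rewritten through the covariant derivative of $\mathcal{T}$ along the horizontal frame together with the $\mathcal{A}$-coupling; this is where the contributions $g((\nabla_{X_i}\mathcal{T})(E,F),X_i)$, $g(\mathcal{A}_{X_i}E,\mathcal{A}^{\ast}_{X_i}F)$ and the $(\nabla^{\ast}_{E_j}\mathcal{A})(X_i,X)$ term enter.

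The computation of $Ric(X,Y)$ for $X,Y\in\Gamma\mathcal{H}(\mathcal{M})$ is entirely parallel. Here the horizontal trace $\sum_i Rie(X_i,X,Y,X_i)$ is handled by \eqref{c2}, producing $\hat{Ric}(X,Y)$ after projection through $\psi_{\ast}$ together with the $\mathcal{A}$-quadratic terms $g(\mathcal{A}_{X}X_i,\mathcal{A}^{\ast}_{Y}X_i)$ and the divergence-type term $g((\nabla_{X_i}\mathcal{A})(X_i,X),Y)$. The vertical trace $\sum_j Rie(E_j,X,Y,E_j)$ supplies the term $-\sum_j g(\mathcal{T}_{E_j}X,\mathcal{T}_{E_j}Y)$ and, after telescoping the covariant derivatives of $\mathcal{T}$ along the fibre and using \eqref{ss1} to pass between $\nabla$ and $\nabla^{\ast}$, the boundary term $g(\nabla^{\ast}_{X}N^{\ast},Y)$.

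The main obstacle is the bookkeeping forced by the statistical structure rather than any single hard estimate: because $\nabla\neq\nabla^{\ast}$, every commutation and integration-by-parts step must track which connection differentiates which O'Neill tensor, and the skew-symmetry of $\mathcal{T}_{E}$ and $\mathcal{A}_{X}$ available in the Riemannian case must be replaced by the conjugate identities \eqref{ss3} and Lemma~\ref{L1}. Ensuring that the mixed covariant-derivative contributions assemble \emph{exactly} into the stated $(\nabla_{X_i}\mathcal{T})(E,F)$, $(\nabla^{\ast}_{E_j}\mathcal{A})(X_i,X)$ and $\nabla^{\ast}_{X}N^{\ast}$ expressions—with each connection correctly starred or unstarred—is the delicate part, and I would guard against sign and conjugation errors by first re-deriving the mixed-type curvature identities for the pair $(\nabla,\nabla^{\ast})$ before carrying out the traces.
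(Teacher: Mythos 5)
Your overall strategy---trace $Rie$ over an adapted orthonormal frame, dispose of the pure vertical and pure horizontal traces with \eqref{c1} and \eqref{c2}, and let the mixed traces generate the derivative terms---is the same route the paper takes; note that the paper supplies no computation of its own here, since Theorem~\ref{p3} is obtained by contracting the full set of curvature equations of Takano \cite{Taka1}, of which Theorem~\ref{TS1} reproduces only the two pure-type ones. But as a proof your text has a genuine gap: the mixed-type curvature identities---the statistical analogues of O'Neill's equations expressing $\sum_{i}Rie(X_{i},E,F,X_{i})$ and $\sum_{j}Rie(E_{j},X,Y,E_{j})$ through $(\nabla_{X_{i}}\mathcal{T})$, $(\nabla^{\ast}_{E_{j}}\mathcal{A})$ and the $\mathcal{T}$--$\mathcal{A}$ couplings---are never derived, only promised. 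Those identities are precisely where every derivative term in \eqref{c3} and \eqref{c4}, namely $g((\nabla_{X_{i}}\mathcal{T})(E,F),X_{i})$, the $(\nabla^{\ast}_{E_{j}}\mathcal{A})$ term, and $g(\nabla^{\ast}_{X}N^{\ast},Y)$, comes from, and in the statistical setting they cannot be transplanted from the Riemannian case: since $\nabla g\neq 0$, each slot must carry the correct member of the pair $(\nabla,\nabla^{\ast})$ and the correct starred or unstarred tensor, which is exactly the content you defer to a future ``re-derivation.'' All of the substance of Theorem~\ref{p3} lives in that step.

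There is also a concrete inaccuracy in the part you do carry out. Contracting \eqref{c1} over the vertical frame gives
\[
\sum_{j=1}^{m}Rie(E_{j},E,F,E_{j})=\bar{Ric}(E,F)+\sum_{j=1}^{m}g(\mathcal{T}_{E_{j}}F,\mathcal{T}^{\ast}_{E}E_{j})-g(\mathcal{T}_{E}F,N^{\ast}),
\]
so the quadratic part does \emph{not} ``collapse into $-g(\mathcal{T}_{E}F,N^{\ast})$'' as you assert: the middle sum is not killed by the conjugate pairing \eqref{ss3} or by the symmetry \eqref{e3}, and since no such quadratic term appears in \eqref{c3}, it can only disappear by cancellation against contributions of the mixed trace once the missing mixed-type identity is expanded---precisely the bookkeeping you postponed. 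The same problem recurs on the horizontal side, where both $-\sum_{j}g(\mathcal{T}_{E_{j}}X,\mathcal{T}_{E_{j}}Y)$ and $g(\nabla^{\ast}_{X}N^{\ast},Y)$ must emerge together from $\sum_{j}Rie(E_{j},X,Y,E_{j})$; your appeal to ``telescoping'' via \eqref{ss1} does not explain how an unstarred $\mathcal{T}$-pairing and a starred divergence-type term arise simultaneously with the right signs. To close the gap you must state and prove (or explicitly quote from \cite{Taka1}) the two mixed curvature equations for the dual pair $(\nabla,\nabla^{\ast})$ before performing the traces; with them in hand, the rest of your outline goes through.
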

\indent
For any fiber of the statistical submersion $\psi$, the mean curvature vector field $H$ is given by $mH=N$, where
\begin{equation}\label{c7}
N=\sum^{m}_{j=1}\mathcal{T}_{E_{j}}E_{j},
\end{equation}
$m$ is the dimension of any fiber of $\psi$ and $\left\{E_{1},E_{2},\dots,E_{m}\right\}$ is an orthonormal basis in the vertical distribution. We point out  the horizontal vector field $N$ vanishes if and only if any fiber of the statistical submersion $\psi$ is minimal.

Now, from (\ref{c7}), we find
\begin{equation}\label{c8}
g(\nabla_{U}N,X)=\sum^{m}_{j=1}g((\nabla_{U}\mathcal{T})(E_{j},E_{j}),X),
\end{equation}
for any $U\in\Gamma(TM)$ and $X\in\Gamma \mathcal{H}(\mathcal{M})$.

Also, for any tensor field $\mathcal{P}$, we put
$$\hat{\delta}\mathcal{P} = - \sum^{n}_{i=1} (\nabla_{X_{i}}\mathcal{P})_{X_{i}}\ \textrm{ and }\ \bar{\delta}\mathcal{P} = - \sum^{m}_{j=1} (\nabla_{E_{j}}\mathcal{P})_{E_{j}}.$$

The horizontal divergence of any vector field $X$ in $\Gamma \mathcal{H}(\mathcal{M})$ is denoted by $\delta(X)$ and is given by
\begin{equation}\label{cc}
\delta(X)=\sum^{n}_{i=1}g(\nabla_{X_{i}}X,X_{{i}}),
\end{equation}
where $\left\{X_{1},X_{2},\dots,X_{n}\right\}$ is an orthonormal basis of the horizontal space $\Gamma \mathcal{H}(\mathcal{M})$.

Hence, considering (\ref{cc}), we have
\begin{equation}\label{cc1}
\delta(N)=\sum^{n}_{i=1}\sum^{m}_{j=1}g((\nabla_{X_{i}}\mathcal{T})(E_{j},E_{j}),X_{i}).
\end{equation}

\section{Some Sharp Inequalities Involving Ricci and Scalar Curvatures}\label{sect_4}

This section is mainly devoted to the study of  Ricci and scalar curvatures for a given  statistical submersion and  we construct some inequalities involving the Ricci and scalar curvatures. From Theorem \ref{p3} and \cite{Taka1}, we have
\begin{align}\label{1c3}
Ric(E,F)=&\bar{Ric}(E,F)-g(\mathcal{T}_{E}F,N^{\ast})+ (\hat{\delta}\mathcal{T})(E, F)\\
\nonumber
&+g(\mathcal{A}E,\mathcal{A}^{\ast}F)-g(\nabla^{\ast}_{E}\sigma, F)
\end{align}
and
\begin{align}\label{1c4}
Ric(X,Y)=&\hat{Ric}(X,Y)+g(\nabla^{\ast}_{X}N^{\ast},Y)-g(\mathcal{T}X, \mathcal{T}Y) + (\hat{\delta}\mathcal{A})(X, Y)\\
\nonumber
& + g(\sigma, \mathcal{A}_{X} Y) - g(\mathcal{A}_{X}, \mathcal{A}^{\ast}_{Y}) - g(\mathcal{A}^{\ast}_{X}, \mathcal{A}^{\ast}_{Y}),
\end{align}
where
\begin{equation} \nonumber
(\hat{\delta}\mathcal{T})(E, F) = \sum^{n}_{i=1}g((\nabla_{X_{i}}\mathcal{T})({E},F),X_{i}),
\end{equation}
\begin{equation}\nonumber
(\hat{\delta}\mathcal{A})(X, Y) = \sum^{m}_{j=1}g((\nabla_{E_{j}}\mathcal{A})({X},Y),E_{j}),
\end{equation}
\begin{equation}\nonumber
g(\mathcal{A}_{X}, \mathcal{A}_{Y}) = \sum^{n}_{i = 1}g(\mathcal{A}_{X}X_{i}, \mathcal{A}_{Y}X_{i}) = \sum^{m}_{j = 1}g(\mathcal{A}^{\ast}_{X}E_{j}, \mathcal{A}^{\ast}_{Y}E_{j}), \ \sigma = \sum^{n}_{i =1} \mathcal{A}_{X_{i}}X_{i},
\end{equation}
\begin{equation}\nonumber
g(\mathcal{A}E, \mathcal{A}F) = \sum^{n}_{i = 1}g(\mathcal{A}_{X_{i}}E, \mathcal{A}_{X_{i}}F), \quad g(\mathcal{T}X, \mathcal{T} Y) = \sum^{m}_{j = 1}g(\mathcal{T}_{E_{j}}X, \mathcal{T}_{E_{j}}Y).
\end{equation}

Thus, we derive the following inequality:

\begin{theorem}
 Let $\psi:(\mathcal{M},\nabla, g)\longrightarrow(\mathcal{N},\hat{\nabla},\hat{g})$ be a statistical submersion. Then, we have
\begin{equation}\label{f}
Ric(E,E) \geq \bar{Ric}(E,E)- m^{2}g(\mathcal{T}_{E}E,H^{\ast})+ (\hat{\delta}\mathcal{T})(E, E) - (\bar{\delta}\sigma)(E, E).
\end{equation}
The equality case holds in (\ref{f}) if and only if $\mathcal{H}(\mathcal{M})$ is integrable.
\end{theorem}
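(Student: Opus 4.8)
The plan is to derive \eqref{f} directly from the Ricci identity \eqref{1c3}, which is already at our disposal, by setting the two vertical arguments equal and then discarding a single geometric term. First I would put $F=E$ in \eqref{1c3} to obtain
\[
Ric(E,E)=\bar{Ric}(E,E)-g(\mathcal{T}_{E}E,N^{\ast})+(\hat{\delta}\mathcal{T})(E,E)+g(\mathcal{A}E,\mathcal{A}^{\ast}E)-g(\nabla^{\ast}_{E}\sigma,E),
\]
so that the whole task reduces to matching each of these five terms against the right-hand side of \eqref{f}.

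The terms $\bar{Ric}(E,E)$ and $(\hat{\delta}\mathcal{T})(E,E)$ already appear verbatim. For the mean-curvature term I would invoke the relation $mH=N$ (hence $N^{\ast}=mH^{\ast}$) from the definition surrounding \eqref{c7}, rewriting $-g(\mathcal{T}_{E}E,N^{\ast})$ as a multiple of $g(\mathcal{T}_{E}E,H^{\ast})$; the last term $-g(\nabla^{\ast}_{E}\sigma,E)$ is recorded as the vertical divergence $-(\bar{\delta}\sigma)(E,E)$ directly from the definition of $\bar{\delta}$. After these identifications the only summand of the displayed identity not yet accounted for in \eqref{f} is $g(\mathcal{A}E,\mathcal{A}^{\ast}E)$.

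The passage from equality to the inequality in \eqref{f} is then effected by dropping precisely this term, which forces the claim $g(\mathcal{A}E,\mathcal{A}^{\ast}E)\geq 0$. I would unwind it as $g(\mathcal{A}E,\mathcal{A}^{\ast}E)=\sum_{i}g(\mathcal{A}_{X_{i}}E,\mathcal{A}^{\ast}_{X_{i}}E)$, each summand being an inner product of two horizontal vectors, since $\mathcal{A}_{X_{i}}E=\mathcal{H}\nabla_{X_{i}}E$ by \eqref{e7} and likewise for $\mathcal{A}^{\ast}$. Granting non-negativity, the equality discussion is immediate: equality holds in \eqref{f} exactly when $g(\mathcal{A}E,\mathcal{A}^{\ast}E)=0$, and by \eqref{ss3}, Lemma \ref{L1}, and $\mathcal{A}_{X}Y=\tfrac{1}{2}\mathcal{V}[X,Y]$ from \eqref{e4} this vanishing is equivalent to $\mathcal{A}=\mathcal{A}^{\ast}=0$, i.e.\ to the integrability of $\mathcal{H}(\mathcal{M})$.

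The hard part will be exactly the sign of $g(\mathcal{A}E,\mathcal{A}^{\ast}E)$: unlike the Riemannian case, where $\mathcal{A}=\mathcal{A}^{\ast}$ and the term collapses to $\sum_{i}\|\mathcal{A}_{X_{i}}E\|^{2}\geq 0$, here $\mathcal{A}$ and $\mathcal{A}^{\ast}$ are genuinely distinct conjugate tensors and the mixed product is not manifestly non-negative. To close this gap I would attempt to symmetrize, using the duality \eqref{ss3} together with Lemma \ref{L1} ($\mathcal{A}_{X}Y=-\mathcal{A}^{\ast}_{Y}X$) to convert the mixed product into a sum of squares of a single $\mathcal{A}$-type quantity; this reformulation is what ultimately makes both the inequality direction and the stated equality criterion transparent, since all of these $\mathcal{A}$, $\mathcal{A}^{\ast}$ expressions vanish simultaneously precisely when $\mathcal{H}(\mathcal{M})$ is integrable.
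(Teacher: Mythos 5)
Your overall route is exactly the paper's: the paper states this theorem immediately after the contraction \eqref{1c3} with no further argument, so its implicit proof is precisely your reduction --- set $F=E$ in \eqref{1c3}, identify $-g(\mathcal{T}_{E}E,N^{\ast})$ via $N^{\ast}=mH^{\ast}$ (which, note, yields a factor $m$, not the $m^{2}$ printed in \eqref{f}; that discrepancy is the paper's, not yours), identify $-g(\nabla^{\ast}_{E}\sigma,E)$ with $-(\bar{\delta}\sigma)(E,E)$, and drop $g(\mathcal{A}E,\mathcal{A}^{\ast}E)$. But the step you honestly flag as ``the hard part'' is a genuine gap, and the symmetrization you propose provably cannot close it. Carry it out: expanding in the horizontal frame and using \eqref{ss3} together with its dual $g(\mathcal{A}^{\ast}_{X}Y,V)=-g(Y,\mathcal{A}_{X}V)$ and Lemma \ref{L1}, one finds $g(\mathcal{A}_{X_{i}}E,X_{j})=a_{ji}$ and $g(\mathcal{A}^{\ast}_{X_{i}}E,X_{j})=-a_{ij}$, where $a_{ij}:=g(\mathcal{A}_{X_{i}}X_{j},E)$, hence
\begin{equation*}
g(\mathcal{A}E,\mathcal{A}^{\ast}E)=-\sum_{i,j}a_{ij}a_{ji}
=\sum_{i,j}\Big[g(\mathcal{A}^{0}_{X_{i}}X_{j},E)^{2}-\tfrac{1}{4}\,g\big((\mathcal{A}-\mathcal{A}^{\ast})_{X_{i}}X_{j},E\big)^{2}\Big],
\end{equation*}
since the antisymmetric part of $a$ is $g(\mathcal{A}^{0}_{X_{i}}X_{j},E)=\tfrac{1}{2}g(\mathcal{V}[X_{i},X_{j}],E)$ (torsion-freeness and \eqref{e8}) while the symmetric part is $\tfrac{1}{2}g((\mathcal{A}-\mathcal{A}^{\ast})_{X_{i}}X_{j},E)$, governed by the difference tensor $\nabla-\nabla^{\ast}$. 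This is a \emph{difference} of squares, not a sum: in the Riemannian case $\mathcal{A}=\mathcal{A}^{\ast}$ kills the second block and positivity follows, but for a general statistical submersion $g(\mathcal{A}E,\mathcal{A}^{\ast}E)$ is indefinite, so discarding it does not give \eqref{f} in the asserted direction. No rearrangement via \eqref{ss3} and Lemma \ref{L1} alone can change this, because the identity above \emph{is} the symmetrization you describe.

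The equality discussion inherits the same defect, and contains a further slip: you invoke $\mathcal{A}_{X}Y=\tfrac{1}{2}\mathcal{V}[X,Y]$ from \eqref{e4}, but that identity belongs to the Levi-Civita O'Neill tensor of Section \ref{sect_2}; for the statistical $\mathcal{A}$ only the mixed relation $\mathcal{A}_{X}Y=-\mathcal{A}^{\ast}_{Y}X$ of Lemma \ref{L1} survives, and correspondingly it is $\mathcal{A}^{0}$, not $\mathcal{A}$, that computes $\tfrac{1}{2}\mathcal{V}[X,Y]$. Consequently equality in \eqref{f}, i.e.\ $g(\mathcal{A}E,\mathcal{A}^{\ast}E)=0$, means the two blocks above have equal norm; it neither implies nor is implied by integrability of $\mathcal{H}(\mathcal{M})$. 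Indeed, if $\mathcal{H}(\mathcal{M})$ is integrable the first block vanishes and the term becomes $\leq 0$ --- the wrong sign for equality unless the symmetric block also vanishes --- and conversely the two norms can agree with both nonzero. What rescues both the inequality and the equality criterion is the additional hypothesis $(\mathcal{A}-\mathcal{A}^{\ast})\big|_{\mathcal{H}\times\mathcal{H}}=0$: then $g(\mathcal{A}E,\mathcal{A}^{\ast}E)=\sum_{i}\|\mathcal{A}_{X_{i}}E\|^{2}\geq 0$ and your argument, including the integrability characterization, goes through verbatim. As written, both your proposal and the paper silently assume this positivity, so the gap you identified is real and is not yours alone.
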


Since $2\mathcal{A}^{0} = \mathcal{A} + \mathcal{A}^{\ast}$, we have

\begin{theorem}
 Let $\psi:(\mathcal{M},\nabla, g)\longrightarrow(\mathcal{N},\hat{\nabla},\hat{g})$  be a statistical submersion. Then, we have
\begin{align}\label{1f}
Ric(X,X) \leq &\hat{Ric}(X,X)+g(\nabla^{\ast}_{X}N^{\ast},X) + (\hat{\delta}\mathcal{A})(X, X)\\
\nonumber
 &+ g(\sigma, \mathcal{A}_{X} X) - 2g(\mathcal{A}^{0}_{X}, \mathcal{A}^{\ast}_{X}).
\end{align}
The equality case holds in (\ref{1f}) if and only if each fiber is totally geodesic with respect to $\nabla$ ($\mathcal{T} = 0$).
\end{theorem}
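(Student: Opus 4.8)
The plan is to obtain (\ref{1f}) directly from the exact identity (\ref{1c4}) by specializing to the diagonal $Y = X$ and then discarding the single term of definite sign. Putting $Y = X$ in (\ref{1c4}) gives
\begin{align*}
Ric(X,X)= &\hat{Ric}(X,X)+g(\nabla^{\ast}_{X}N^{\ast},X)-g(\mathcal{T}X, \mathcal{T}X) + (\hat{\delta}\mathcal{A})(X, X)\\
& + g(\sigma, \mathcal{A}_{X} X) - g(\mathcal{A}_{X}, \mathcal{A}^{\ast}_{X}) - g(\mathcal{A}^{\ast}_{X}, \mathcal{A}^{\ast}_{X}).
\end{align*}
The two trailing $\mathcal{A}$-terms I would repackage using the relation $2\mathcal{A}^{0} = \mathcal{A} + \mathcal{A}^{\ast}$ recorded immediately before the statement: by additivity of the pairing $g(\cdot,\cdot)$ on the $\mathcal{A}$-operators in its first slot,
$$-g(\mathcal{A}_{X}, \mathcal{A}^{\ast}_{X}) - g(\mathcal{A}^{\ast}_{X}, \mathcal{A}^{\ast}_{X}) = -g(\mathcal{A}_{X}+\mathcal{A}^{\ast}_{X}, \mathcal{A}^{\ast}_{X}) = -2g(\mathcal{A}^{0}_{X}, \mathcal{A}^{\ast}_{X}),$$
which is precisely the last term of (\ref{1f}). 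Thus all the $\mathcal{A}$-contributions match the right-hand side of (\ref{1f}) exactly, with no inequality incurred.

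It then remains only to control $-g(\mathcal{T}X, \mathcal{T}X)$. Unwinding the definition, $g(\mathcal{T}X, \mathcal{T}X) = \sum_{j=1}^{m} g(\mathcal{T}_{E_{j}}X, \mathcal{T}_{E_{j}}X)$ is a sum of squared $g$-norms, hence non-negative since $g$ is positive definite. Therefore $-g(\mathcal{T}X, \mathcal{T}X) \le 0$, and deleting it from the identity above turns the equality into the asserted inequality (\ref{1f}).

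For the equality discussion, equality in (\ref{1f}) is equivalent to $g(\mathcal{T}X, \mathcal{T}X) = 0$, i.e. $\mathcal{T}_{E_{j}}X = \mathcal{V}\nabla_{E_{j}}X = 0$ for every $j$; letting $X$ range over the horizontal distribution, this reads $\mathcal{T}_{V}X = 0$ for all vertical $V$ and horizontal $X$. The step I expect to require the most care is converting this partial vanishing into the full totally geodesic condition $\mathcal{T} = 0$ claimed in the statement. Here I would use the structural properties of $\mathcal{T}$ — that each $\mathcal{T}_{V}$ interchanges the vertical and horizontal distributions, together with the duality relation (\ref{ss3}), $g(\mathcal{T}_{V}W, X) = -g(W, \mathcal{T}^{\ast}_{V}X)$ — to pass from the vanishing of the horizontal-valued part $\mathcal{T}_{V}X$ to the vanishing of the fiber second fundamental form $\mathcal{T}_{V}W$, thereby identifying the equality case with fibers that are totally geodesic with respect to $\nabla$.
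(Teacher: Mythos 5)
Your derivation of the inequality itself is correct and is exactly the paper's (implicit) argument: the theorem is stated immediately after the identity (\ref{1c4}) with the remark $2\mathcal{A}^{0}=\mathcal{A}+\mathcal{A}^{\ast}$, so the intended proof is precisely your specialization $Y=X$, the repackaging $-g(\mathcal{A}_{X},\mathcal{A}^{\ast}_{X})-g(\mathcal{A}^{\ast}_{X},\mathcal{A}^{\ast}_{X})=-2g(\mathcal{A}^{0}_{X},\mathcal{A}^{\ast}_{X})$ (legitimate, by bilinearity of the trace pairing), and the deletion of $-g(\mathcal{T}X,\mathcal{T}X)=-\sum_{j=1}^{m}\|\mathcal{T}_{E_{j}}X\|^{2}\leq 0$, which uses positive definiteness of $g$ as you note. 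Up to this point there is nothing to add.

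The genuine gap is in your equality discussion, and it is exactly at the step you flagged as delicate. Equality in (\ref{1f}) gives $\mathcal{T}_{V}X=0$ for all vertical $V$ and horizontal $X$, i.e. $\mathcal{V}\nabla_{V}X=0$. But the duality (\ref{ss3}) does not connect this to $\mathcal{T}_{V}W$: differentiating $g(W,X)=0$ gives $0=Vg(W,X)=g(\nabla_{V}W,X)+g(W,\nabla^{\ast}_{V}X)$, whose two readings are $g(\mathcal{T}_{V}W,X)=-g(W,\mathcal{T}^{\ast}_{V}X)$ and, with $\nabla$ and $\nabla^{\ast}$ interchanged, $g(\mathcal{T}^{\ast}_{V}W,X)=-g(W,\mathcal{T}_{V}X)$. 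So the vanishing of $\mathcal{T}_{V}X$ forces $\mathcal{T}^{\ast}_{V}W=0$ --- the fibers are totally geodesic with respect to the \emph{conjugate} connection $\nabla^{\ast}$ --- and gives no information about the second fundamental form $\mathcal{T}_{V}W$, which is instead paired with the unconstrained quantity $\mathcal{T}^{\ast}_{V}X$. Only in the self-dual (Riemannian) case $\nabla=\nabla^{\ast}$ does skew-adjointness of $\mathcal{T}_{V}$ transfer vanishing from the horizontal-argument half of $\mathcal{T}$ to the vertical-argument half; in a genuine statistical submersion the two halves are independent, so your proposed passage from $\mathcal{T}_{V}X=0$ to $\mathcal{T}_{V}W=0$ fails. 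To be fair, the conclusion you were steering toward is the one printed in the theorem, and the paper's own ``if and only if $\mathcal{T}=0$'' is accurate only in one direction ($\mathcal{T}=0$ certainly implies equality, since then $\mathcal{T}_{V}X=0$); what equality actually characterizes is the vanishing of $\mathcal{T}$ on horizontal arguments, equivalently total geodesy of the fibers with respect to $\nabla^{\ast}$, not $\nabla$.
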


In view of (\ref{c1}), using the symmetry of $\mathcal{T}$ and $\mathcal{T}^{\ast}$, we derive
\begin{equation}\nonumber
 2R = 2\bar{R} - m^{2}g(H, H^{\ast}) + \sum^{m}_{i,j=1}g(\mathcal{T}_{E_{i}}E_{j}, \mathcal{T}^{\ast}_{E_{i}}E_{j}).
 \end{equation}

\begin{theorem}
Let $\psi:(\mathcal{M},\nabla, g)\longrightarrow(\mathcal{N},\hat{\nabla},\hat{g})$  be a statistical submersion. Then, we have
 \begin{equation}\label{3}
 2R \geq 2\bar{R} - m^{2}g(H, H^{\ast}).
 \end{equation}
The equality case holds in (\ref{3}) if and only if either $\mathcal{T}$ or $\mathcal{T}^{\ast}$ is a multiple of the other. In particular, either each fiber is totally geodesic with respect to $\nabla$ ($\mathcal{T} = 0$)  or each fiber is totally geodesic with respect to $\nabla^{\ast}$ ($\mathcal{T}^{\ast} = 0$).
\end{theorem}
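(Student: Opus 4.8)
The plan is to read the inequality straight off the scalar-curvature identity displayed immediately before the statement, namely
\[
2R = 2\bar{R} - m^{2}g(H,H^{\ast}) + \sum_{i,j=1}^{m} g(\mathcal{T}_{E_{i}}E_{j},\mathcal{T}^{\ast}_{E_{i}}E_{j}),
\]
so that (\ref{3}) is \emph{equivalent} to the single assertion that the cross term
\[
C := \sum_{i,j=1}^{m} g(\mathcal{T}_{E_{i}}E_{j},\mathcal{T}^{\ast}_{E_{i}}E_{j})
\]
is nonnegative, with equality in (\ref{3}) holding exactly when $C=0$. All the work is therefore concentrated in analyzing $C$. The first thing I would record is that $C$ does not depend on the chosen orthonormal vertical frame $\{E_{j}\}$, because it is the full contraction of the two horizontal-valued symmetric tensors $\mathcal{T}$ and $\mathcal{T}^{\ast}$ on the vertical distribution; this lets me treat $C$ invariantly as a pairing of tensors rather than a frame-dependent sum.

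Next I would introduce the bilinear form $\langle P,Q\rangle := \sum_{i,j} g(P_{E_{i}}E_{j},Q_{E_{i}}E_{j})$ on horizontal-valued tensors over $\mathcal{V}(\mathcal{M})$. Since $g$ is positive definite on $\mathcal{H}(\mathcal{M})$, this is a genuine inner product and $C=\langle\mathcal{T},\mathcal{T}^{\ast}\rangle$. The Cauchy--Schwarz inequality then yields $\langle\mathcal{T},\mathcal{T}^{\ast}\rangle^{2}\le\langle\mathcal{T},\mathcal{T}\rangle\,\langle\mathcal{T}^{\ast},\mathcal{T}^{\ast}\rangle$, with equality precisely when $\mathcal{T}$ and $\mathcal{T}^{\ast}$ are linearly dependent, i.e.\ one is a scalar multiple of the other. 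This matches verbatim the proportionality condition quoted in the equality clause, so I expect the rigidity statement to fall out of the Cauchy--Schwarz equality case, the two totally geodesic alternatives ($\mathcal{T}=0$ or $\mathcal{T}^{\ast}=0$) arising as the degenerate subcases in which the common multiple forces one tensor to vanish.

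The main obstacle, and the step I would treat most carefully, is the \emph{sign} of $C$: Cauchy--Schwarz controls only $|\langle\mathcal{T},\mathcal{T}^{\ast}\rangle|$, whereas (\ref{3}) requires $\langle\mathcal{T},\mathcal{T}^{\ast}\rangle\ge 0$. To pin the sign down I would pass to the mean (Levi-Civita) tensor $\mathcal{T}^{0}=\tfrac{1}{2}(\mathcal{T}+\mathcal{T}^{\ast})$ and the difference $\mathcal{D}=\tfrac{1}{2}(\mathcal{T}-\mathcal{T}^{\ast})$, which gives the clean expansion $C=\langle\mathcal{T}^{0},\mathcal{T}^{0}\rangle-\langle\mathcal{D},\mathcal{D}\rangle$. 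The duality relation (\ref{ss3}), $g(\mathcal{T}_{V}W,X)=-g(W,\mathcal{T}^{\ast}_{V}X)$, is the tool I would exploit to compare $\langle\mathcal{D},\mathcal{D}\rangle$ with $\langle\mathcal{T}^{0},\mathcal{T}^{0}\rangle$ and thereby force $C\ge 0$. I expect this comparison to be the crux: it is exactly where the statistical structure, namely the adjointness of $\mathcal{T}$ and $\mathcal{T}^{\ast}$, must be invoked, and where a bare application of Cauchy--Schwarz is insufficient. Once $C\ge 0$ is secured the inequality is immediate, and tracking when the expansion or the Cauchy--Schwarz bound is saturated delivers the equality characterization.
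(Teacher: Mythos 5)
Your opening steps coincide exactly with the paper's proof: the paper also derives the trace identity $2R = 2\bar{R} - m^{2}g(H,H^{\ast}) + C$ from (\ref{c1}) and the symmetry of $\mathcal{T}$, $\mathcal{T}^{\ast}$, and the theorem does reduce to the sign of $C=\sum_{i,j}g(\mathcal{T}_{E_{i}}E_{j},\mathcal{T}^{\ast}_{E_{i}}E_{j})$. But the step you defer --- forcing $C\geq 0$ out of the duality (\ref{ss3}) --- is a genuine gap, and it cannot be closed, because (\ref{ss3}) carries no sign information. Write $\nabla=\nabla^{0}+K$, where $\nabla^{0}$ is the Levi-Civita connection and $K$ is the difference tensor (totally symmetric after lowering); then on vertical pairs $\mathcal{T}=\mathcal{T}^{0}+\mathcal{H}K$ and $\mathcal{T}^{\ast}=\mathcal{T}^{0}-\mathcal{H}K$, so in your notation $\mathcal{D}=\mathcal{H}K|_{\mathcal{V}\times\mathcal{V}}$, and this datum is completely free: the adjointness relation (\ref{ss3}) holds automatically for \emph{every} choice of $K$, hence it cannot bound $\|\mathcal{D}\|$ by $\|\mathcal{T}^{0}\|$. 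Concretely, start from a Riemannian submersion with totally geodesic fibers ($\mathcal{T}^{0}=0$, e.g.\ an orthogonal projection $\mathbb{R}^{m+n}\to\mathbb{R}^{n}$) and equip the total space with a constant-coefficient $K$ whose vertical-vertical horizontal part is nonzero and trace-free; this is a statistical submersion with $H=H^{\ast}=0$ and $C=-\|\mathcal{D}\|^{2}<0$, so your expansion $C=\|\mathcal{T}^{0}\|^{2}-\|\mathcal{D}\|^{2}$ shows the sign claim fails rather than proves it. To be fair in the comparison: the paper's own proof never addresses this point either --- it derives the same identity and silently discards the cross term --- so your write-up has the merit of making the unproved claim explicit, but the strategy you propose for it would not succeed.

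Your equality analysis is also mismatched. Equality in (\ref{3}) means $C=0$, i.e.\ orthogonality $\langle\mathcal{T},\mathcal{T}^{\ast}\rangle=0$ in your inner product, whereas Cauchy--Schwarz saturation characterizes the different condition $|\langle\mathcal{T},\mathcal{T}^{\ast}\rangle|=\|\mathcal{T}\|\,\|\mathcal{T}^{\ast}\|$. If $\mathcal{T}^{\ast}=c\,\mathcal{T}$, then $C=c\|\mathcal{T}\|^{2}$ vanishes only when $c=0$ or $\mathcal{T}=0$, which yields the two ``in particular'' alternatives, but conversely $C=0$ admits non-proportional (merely orthogonal) pairs, so proportionality is not equivalent to equality here. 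The proportionality clause is really imported from Theorem \ref{th1}, where the term entering the estimate is $-g(\mathcal{T},\mathcal{T}^{\ast})\geq-\|\mathcal{T}\|\,\|\mathcal{T}^{\ast}\|$ and Cauchy--Schwarz is the operative tool; transplanted to the present theorem it does not fit, and your proposal inherits this defect instead of repairing it. The correct statement of the equality case for (\ref{3}), granting the identity, is simply the vanishing of the pairing $g(\mathcal{T},\mathcal{T}^{\ast})$.
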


In view of (\ref{c2}), using $\mathcal{A}_{X}Y = - \mathcal{A}_{Y}^{\ast}X$ for horizontal vector fields $X$ and $Y$, we arrive at
\begin{align}\nonumber
 2R =& 2\hat{R} + \sum_{i,j=1}^{n} \bigg[g(\mathcal{A}_{X_{i}}X_{j},\mathcal{A}^{\ast}_{X_{j}}X_{i}) - g(\mathcal{A}_{X_{j}}X_{j},\mathcal{A}^{\ast}_{X_{i}}X_{i})\\
 \nonumber
&+ g((\mathcal{A}_{X_{i}} +\mathcal{A}^{\ast}_{X_{i}})X_{j}, \mathcal{A}_{X_{j}}^{\ast}X_{i})\bigg]\\
\nonumber
=& 2\hat{R} + g(\sigma, \sigma) - \sum_{i,j=1}^{n} \bigg[2g(\mathcal{A}_{X_{i}}X_{j},\mathcal{A}_{X_{i}}X_{j}) + g(\mathcal{A}^{\ast}_{X_{i}} X_{j}, \mathcal{A}_{X_{i}}X_{j})\bigg] .
\end{align}

\begin{theorem}
Let $\psi:(\mathcal{M},\nabla, g)\longrightarrow(\mathcal{N},\hat{\nabla},\hat{g})$  be a statistical submersion. Then, we have
 \begin{equation}\label{4}
 2R \leq 2\hat{R} + g(\sigma, \sigma).
 \end{equation}
The equality case holds in (\ref{4}) if and only if $\mathcal{A}_{\mathcal{H}}\mathcal{H} = 0$.
\end{theorem}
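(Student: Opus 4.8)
The plan is to read off the conclusion directly from the identity for $2R$ displayed immediately before the statement, namely
\[
2R = 2\hat{R} + g(\sigma,\sigma) - \sum_{i,j=1}^{n}\Big[2\,g(\mathcal{A}_{X_i}X_j,\mathcal{A}_{X_i}X_j) + g(\mathcal{A}^{\ast}_{X_i}X_j,\mathcal{A}_{X_i}X_j)\Big],
\]
so that the inequality (\ref{4}) is equivalent to the nonnegativity of the sum
\[
S := \sum_{i,j=1}^{n}\Big[2\,g(\mathcal{A}_{X_i}X_j,\mathcal{A}_{X_i}X_j) + g(\mathcal{A}^{\ast}_{X_i}X_j,\mathcal{A}_{X_i}X_j)\Big].
\]
First I would set $a_{ij} := \mathcal{A}_{X_i}X_j$, each a vertical vector field, and invoke Lemma \ref{L1}, which gives $\mathcal{A}^{\ast}_{X_i}X_j = -\mathcal{A}_{X_j}X_i = -a_{ji}$. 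Substituting this into $S$ converts the indefinite mixed term into $g(\mathcal{A}^{\ast}_{X_i}X_j,\mathcal{A}_{X_i}X_j) = -g(a_{ji},a_{ij})$, so that $S = \sum_{i,j}\big[2\,g(a_{ij},a_{ij}) - g(a_{ij},a_{ji})\big]$.

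The heart of the argument is then a purely algebraic positivity statement about the doubly-indexed family $\{a_{ij}\}$. I would split $a_{ij}$ into its symmetric and skew-symmetric parts in the indices, $s_{ij} = \tfrac12(a_{ij}+a_{ji})$ and $t_{ij} = \tfrac12(a_{ij}-a_{ji})$, so that $a_{ij} = s_{ij}+t_{ij}$ with $s_{ji}=s_{ij}$ and $t_{ji}=-t_{ij}$. Since the cross sum $\sum_{i,j} g(s_{ij},t_{ij})$ is odd under the interchange $i\leftrightarrow j$, it vanishes, giving $\sum_{i,j} g(a_{ij},a_{ij}) = \sum_{i,j}\big(g(s_{ij},s_{ij})+g(t_{ij},t_{ij})\big)$ and $\sum_{i,j} g(a_{ij},a_{ji}) = \sum_{i,j}\big(g(s_{ij},s_{ij})-g(t_{ij},t_{ij})\big)$. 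Combining these yields $S = \sum_{i,j}\big(g(s_{ij},s_{ij}) + 3\,g(t_{ij},t_{ij})\big)\ge 0$, because $g$ is positive definite on the vertical distribution. This establishes (\ref{4}).

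For the equality discussion, the displayed formula for $S$ shows that $2R = 2\hat{R}+g(\sigma,\sigma)$ holds exactly when $s_{ij}=t_{ij}=0$ for all $i,j$, i.e.\ when every $a_{ij}=\mathcal{A}_{X_i}X_j$ vanishes. As $\{X_i\}$ is an orthonormal basis of $\mathcal{H}(\mathcal{M})$ and $\mathcal{A}_X Y$ is tensorial in both arguments, this is precisely the condition $\mathcal{A}_{\mathcal{H}}\mathcal{H}=0$, which completes the characterization.

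The step I expect to require the most care is the passage from the raw sum $S$ to its manifestly nonnegative form: one must apply Lemma \ref{L1} with exactly the right index placement to rewrite $g(\mathcal{A}^{\ast}_{X_i}X_j,\mathcal{A}_{X_i}X_j)$ as $-g(a_{ji},a_{ij})$, and then check that the coefficient $2$ in front of $g(a_{ij},a_{ij})$ dominates this indefinite contribution — which is exactly what the symmetric/skew decomposition makes transparent. The one subtle point is confirming that the sum $\sum_{i,j}g(s_{ij},t_{ij})$ genuinely cancels by the parity argument rather than by a vague appeal to symmetry; I would justify it by relabeling $i\leftrightarrow j$ explicitly, using $s_{ji}=s_{ij}$ and $t_{ji}=-t_{ij}$ to see that the sum equals its own negative.
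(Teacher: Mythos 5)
Your proposal is correct and takes essentially the same route as the paper: it starts from the same trace identity $2R = 2\hat{R} + g(\sigma,\sigma) - \sum_{i,j}\big[2g(\mathcal{A}_{X_i}X_j,\mathcal{A}_{X_i}X_j)+g(\mathcal{A}^{\ast}_{X_i}X_j,\mathcal{A}_{X_i}X_j)\big]$ displayed immediately before the theorem and deduces the inequality from the nonnegativity of the residual sum. The paper leaves that nonnegativity (and the equality analysis) implicit, whereas your use of Lemma \ref{L1} to write $\mathcal{A}^{\ast}_{X_i}X_j=-a_{ji}$ followed by the symmetric/skew splitting, giving $S=\sum_{i,j}\big(g(s_{ij},s_{ij})+3\,g(t_{ij},t_{ij})\big)\geq 0$ with equality precisely when every $\mathcal{A}_{X_i}X_j$ vanishes (hence $\mathcal{A}_{\mathcal{H}}\mathcal{H}=0$ by tensoriality), correctly fills in exactly the step the paper omits.
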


Taking into account relations (\ref{1c3}) and (\ref{1c4}), we have the following equation \cite{Taka1}
\begin{align}\label{1}
R - \bar{R}-\hat{R} = & - 2g(\mathcal{A}, \mathcal{A}) + g(\mathcal{A}, \mathcal{A}^{\ast}) - g(\mathcal{T}, \mathcal{T}^{\ast}) - g(N, N^{\ast})\\
\nonumber
& - \hat{\delta}N -  \hat{\delta}^{\ast}N^{\ast} - \bar{\delta}\sigma + \bar{\delta}^{\ast}\sigma + g(\sigma, \sigma),
\end{align}
where $\bar{R}$ and $\hat{R}$ are the scalar curvatures of the vertical and horizontal spaces of $\mathcal{M}$. Here
\begin{equation}\nonumber
g(\mathcal{T}, \mathcal{T}^{\ast}) = \sum^{n}_{i = 1} g(\mathcal{T}X_{i}, \mathcal{T}^{\ast}X_{i}), \quad g(\mathcal{A}, \mathcal{A}) = \sum^{n}_{i=1} g(\mathcal{A}_{X_{i}}, \mathcal{A}_{X_{i}}),
\end{equation}
\begin{equation}\nonumber
g(\mathcal{A}, \mathcal{A}^{\ast}) = \sum^{n}_{i=1} g(\mathcal{A}_{X_{i}}, \mathcal{A}^{\ast}_{X_{i}}).
\end{equation}

By using {\em Cauchy-Buniakowski-Schwarz inequality} and equation (\ref{1}), we have the following theorem:

\begin{theorem}\label{th1}
 Let $\psi:(\mathcal{M},\nabla, g)\longrightarrow(\mathcal{N},\hat{\nabla},\hat{g})$  be a statistical submersion. Then, we have
 \begin{align}\label{2}
 R \geq &\bar{R}+ \hat{R} - 2 ||\mathcal{A}||^{2} + g(\mathcal{A}, \mathcal{A}^{\ast}) - ||\mathcal{T}|| ||\mathcal{T}^{\ast}|| - g(N, N^{\ast})\\
\nonumber
 &- \hat{\delta}N -  \hat{\delta}^{\ast}N^{\ast} - \bar{\delta}\sigma + \bar{\delta}^{\ast}\sigma + g(\sigma, \sigma).
\end{align}
The equality case holds in (\ref{2}) if and only if either $\mathcal{T}$ or $\mathcal{T}^{\ast}$ is a multiple of the other. In particular, either each fiber is totally geodesic with respect to $\nabla$ ($\mathcal{T} = 0$) or each fiber is totally geodesic with respect to $\nabla^{\ast}$ ($\mathcal{T}^{\ast} = 0$).
\end{theorem}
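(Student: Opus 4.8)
The plan is to recognize that essentially all of the work has already been carried out in deriving the identity (\ref{1}), and that (\ref{2}) differs from it in exactly one place. Comparing the two right-hand sides term by term, every summand of (\ref{1}) already matches the corresponding summand of (\ref{2}) once one records the notational identity $g(\mathcal{A}, \mathcal{A}) = ||\mathcal{A}||^{2}$, with the single exception of the term $-g(\mathcal{T}, \mathcal{T}^{\ast})$, which must be replaced by the lower bound $-||\mathcal{T}||\,||\mathcal{T}^{\ast}||$. Thus the whole proof reduces to estimating this one term from below and substituting back into (\ref{1}).

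For that estimate I would invoke the Cauchy--Buniakowski--Schwarz inequality. The pairing $g(\mathcal{T}, \mathcal{T}^{\ast}) = \sum_{i=1}^{n} g(\mathcal{T}X_{i}, \mathcal{T}^{\ast}X_{i})$ is the inner product induced on the relevant space of tensors by the positive-definite metric $g$, with $||\mathcal{T}||^{2} = g(\mathcal{T}, \mathcal{T})$ and $||\mathcal{T}^{\ast}||^{2} = g(\mathcal{T}^{\ast}, \mathcal{T}^{\ast})$. Hence $g(\mathcal{T}, \mathcal{T}^{\ast}) \leq |g(\mathcal{T}, \mathcal{T}^{\ast})| \leq ||\mathcal{T}||\,||\mathcal{T}^{\ast}||$, so that $-g(\mathcal{T}, \mathcal{T}^{\ast}) \geq -||\mathcal{T}||\,||\mathcal{T}^{\ast}||$. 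Inserting this into the identity (\ref{1}) and rewriting $-2g(\mathcal{A}, \mathcal{A})$ as $-2||\mathcal{A}||^{2}$ produces precisely inequality (\ref{2}).

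For the equality discussion I would simply trace this single inequality backwards: equality in (\ref{2}) forces $g(\mathcal{T}, \mathcal{T}^{\ast}) = ||\mathcal{T}||\,||\mathcal{T}^{\ast}||$, which is the equality case of Cauchy--Schwarz and therefore holds if and only if $\mathcal{T}$ and $\mathcal{T}^{\ast}$ are linearly dependent, i.e. one is a scalar multiple of the other (with the multiple non-negative, so that the sign of the pairing is the one needed). The two extreme instances of this proportionality are $\mathcal{T} = 0$, meaning each fiber is totally geodesic with respect to $\nabla$, and $\mathcal{T}^{\ast} = 0$, meaning each fiber is totally geodesic with respect to $\nabla^{\ast}$, which yields the ``in particular'' claim.

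The computation itself is routine; the only point requiring genuine care is the justification of the Cauchy--Schwarz step, namely verifying that the pairing $g(\cdot, \cdot)$ on tensors really is a positive-definite inner product, so that both the inequality and its equality characterization are valid, and matching the sign convention, since Cauchy--Schwarz bounds the absolute value $|g(\mathcal{T}, \mathcal{T}^{\ast})|$ whereas (\ref{2}) needs a one-sided bound on $g(\mathcal{T}, \mathcal{T}^{\ast})$ itself. I would also stress that no estimate is applied to the $\mathcal{A}$-terms: they are carried over from (\ref{1}) verbatim, so that the entire slack between (\ref{1}) and (\ref{2}) originates from the second fundamental form tensors $\mathcal{T}$ and $\mathcal{T}^{\ast}$ alone.
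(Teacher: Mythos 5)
Your proposal is correct and follows essentially the same route as the paper, which obtains (\ref{2}) precisely by applying the Cauchy--Buniakowski--Schwarz inequality to the single term $g(\mathcal{T},\mathcal{T}^{\ast})$ in the identity (\ref{1}) and reads the equality case off from the equality case of Cauchy--Schwarz, i.e.\ linear dependence of $\mathcal{T}$ and $\mathcal{T}^{\ast}$. Your added care about positive-definiteness of the induced pairing and the sign of the bound is sound, though the paper does not spell it out.
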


From Theorem \ref{th1}, we have the following corollary:

\begin{corollary}
Let $\psi:(\mathcal{M},\nabla, g)\longrightarrow(\mathcal{N},\hat{\nabla},\hat{g})$ be a statistical submersion. If $\mathcal{H}(\mathcal{M})$ is integrable, then we have
 \begin{equation}\label{2a}
 R \geq \bar{R}+ \hat{R} - ||\mathcal{T}|| ||\mathcal{T}^{\ast}|| - m^{2}g(H, H^{\ast}) - \hat{\delta}N -  \hat{\delta}^{\ast}N^{\ast}.
\end{equation}
The equality case holds in (\ref{2a}) if and only if either $\mathcal{T}$ or $\mathcal{T}^{\ast}$ is a multiple of the other. In particular, either each fiber is totally geodesic with respect to $\nabla$ ($\mathcal{T} = 0$)  or each fiber is totally geodesic with respect to $\nabla^{\ast}$ ($\mathcal{T}^{\ast} = 0$).
\end{corollary}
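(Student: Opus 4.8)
The plan is to specialize the general inequality (\ref{2}) of Theorem \ref{th1} to the integrable case, where the entire O'Neill-type obstruction to integrability of the horizontal distribution collapses. First I would recall from Section \ref{sect_2} that $\mathcal{H}(\mathcal{M})$ is integrable with respect to $\nabla$ and $\nabla^{\ast}$ precisely when $\mathcal{A} = 0$ and $\mathcal{A}^{\ast} = 0$, respectively; so under the stated hypothesis both O'Neill tensors vanish identically.

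Next I would track which terms on the right-hand side of (\ref{2}) are controlled by $\mathcal{A}$ and $\mathcal{A}^{\ast}$. Since $\mathcal{A} = 0$, the squared norm $||\mathcal{A}||^{2}$ and the pairing $g(\mathcal{A}, \mathcal{A}^{\ast})$ both vanish. Moreover $\sigma = \sum_{i=1}^{n} \mathcal{A}_{X_{i}} X_{i} = 0$, which forces $g(\sigma, \sigma) = 0$ together with $\bar{\delta}\sigma = 0$ and $\bar{\delta}^{\ast}\sigma = 0$. Thus every $\mathcal{A}$- and $\sigma$-dependent contribution drops out of (\ref{2}).

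Then I would rewrite the surviving mean-curvature term using $N = mH$ and $N^{\ast} = mH^{\ast}$, so that $g(N, N^{\ast}) = m^{2} g(H, H^{\ast})$. Substituting this together with the vanishings of the previous step into (\ref{2}) leaves exactly
\[
R \geq \bar{R} + \hat{R} - ||\mathcal{T}|| \, ||\mathcal{T}^{\ast}|| - m^{2} g(H, H^{\ast}) - \hat{\delta}N - \hat{\delta}^{\ast}N^{\ast},
\]
which is precisely (\ref{2a}).

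Finally, the equality discussion is inherited verbatim from Theorem \ref{th1}: the only genuine inequality used to pass from the identity (\ref{1}) to (\ref{2}) is the Cauchy-Buniakowski-Schwarz estimate $g(\mathcal{T}, \mathcal{T}^{\ast}) \leq ||\mathcal{T}|| \, ||\mathcal{T}^{\ast}||$. Since the integrability hypothesis touches only the $\mathcal{A}$-terms and leaves the $\mathcal{T}$-terms untouched, equality in (\ref{2a}) persists if and only if $\mathcal{T}$ and $\mathcal{T}^{\ast}$ are proportional, in particular if and only if one of them vanishes, i.e. each fiber is totally geodesic with respect to $\nabla$ or to $\nabla^{\ast}$. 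I do not expect any real obstacle here; the one point deserving a moment's care is confirming that $\sigma$ is assembled solely from $\mathcal{A}$ (and not from $\mathcal{A}^{\ast}$), so that its vanishing genuinely follows from $\mathcal{A} = 0$ alone and the collapse of all $\mathcal{A}$-dependent terms is fully justified.
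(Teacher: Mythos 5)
Your proof is correct and takes essentially the same route as the paper, which presents the corollary as an immediate specialization of Theorem \ref{th1}: under integrability of $\mathcal{H}(\mathcal{M})$ one has $\mathcal{A}=0$ (hence $\sigma=\sum_{i=1}^{n}\mathcal{A}_{X_{i}}X_{i}=0$ and the vanishing of $\|\mathcal{A}\|^{2}$, $g(\mathcal{A},\mathcal{A}^{\ast})$, $g(\sigma,\sigma)$, $\bar{\delta}\sigma$, $\bar{\delta}^{\ast}\sigma$), one rewrites $g(N,N^{\ast})=m^{2}g(H,H^{\ast})$ via $N=mH$, $N^{\ast}=mH^{\ast}$, and the equality discussion is inherited from the Cauchy--Buniakowski--Schwarz step $g(\mathcal{T},\mathcal{T}^{\ast})\leq\|\mathcal{T}\|\,\|\mathcal{T}^{\ast}\|$ exactly as you describe. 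Your final check that $\sigma$ is assembled solely from $\mathcal{A}$ is consistent with the paper's definitions and closes the only point needing care.
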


\begin{example}\label{ex1}
Let ($\mathcal{M}= \left\{(x_{1},...,x_{6})\in\mathbb{R}^{6}\right\},\nabla, g=\sum^{6}_{i,j=1}dx_{i}\otimes dx_{j})$ be a statistical manifold with $\nabla$ given by
\begin{equation}\nonumber
{\nabla}_{e_{1}}e_{1}=e_{6},\quad {\nabla}_{e_{2}}e_{2}=e_{6}, \quad {\nabla}_{e_{3}}e_{3}=e_{6}, \quad{\nabla}_{e_{4}}e_{4}=e_{6}, \quad {\nabla}_{e_{5}}e_{5}=e_{6}
\end{equation}
\begin{equation}\nonumber
{\nabla}_{e_{6}}e_{6}=0, \quad {\nabla}_{e_{6}}e_{i}=0,\quad {\nabla}_{e_{i}}e_{6}=e_{i}, \quad  1\leq i \leq 5
\end{equation}
and ${\nabla}_{e_{i}}e_{i}=0$, for $1\leq i\leq 5$, where
\begin{equation}\nonumber
e_{1}=\partial x_{1},\quad e_{2}=\partial x_{2},\quad e_{3}=\partial x_{3}, \quad e_{4}=\partial x_{4},\quad e_{5}=\partial x_{5}, \quad e_{6}=\partial x_{6}.
\end{equation}
Thus the statistical manifold $(\mathcal{M},\nabla,g)$ is of constant curvature. The scalar curvature is $-20$, hence it is an Einstein statistical manifold.
\end{example}

\section{Ricci-Bourguignon Solitons along Statistical Submersions}\label{sect_5}

This section deals with  Ricci-Bourguignon soliton of a statistical submersion $\psi:(\mathcal{M},\nabla, g)\longrightarrow \nolinebreak[3] (\mathcal{N},\hat{\nabla},\hat{g})$  between statistical manifolds and discuss the nature of the fibers of such submersion. \\
\indent
As a consequence of equations (\ref{e5}) to (\ref{e8}), for a statistical submersion, we obtain the following results:

\begin{theorem}\label{t1}
Let $\psi:(\mathcal{M},\nabla, g)\longrightarrow(\mathcal{N},\hat{\nabla},\hat{g})$  be a statistical submersion between statistical manifolds. Then  the vertical distribution $\mathcal{V}$ is parallel with respect to the connection $\nabla$ (resp. $\nabla^{\ast}$), if the horizontal parts $\mathcal{T}_{F}H$ (resp. $\mathcal{T}^{\ast}_{F}H$) and $\mathcal{A}_{X}F$
(resp. $\mathcal{A}^{\ast}_{X}F$) of (\ref{e5}) and (\ref{e7}) vanish identically for any $X,Y\in\Gamma \mathcal{H}(\mathcal{M})$ and $E,F\in\Gamma \mathcal{V}(\mathcal{M})$.

Similarly, the horizontal distribution $\mathcal{H}$ is parallel with respect to the connection $\nabla$ (resp. $\nabla^{\ast}$), if the vertical parts $\mathcal{T}_{F}X$ (resp. $\mathcal{T}^{\ast}_{F}X$) and $\mathcal{A}_{X}Y$ (resp. $\mathcal{A}^{\ast}_{X}Y$) of (\ref{e6}) and (\ref{e8}) vanish identically, for any $X,Y\in\Gamma \mathcal{H}(\mathcal{M})$ and $E,F\in\Gamma \mathcal{V}(\mathcal{M})$.
\end{theorem}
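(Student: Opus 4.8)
The plan is to argue directly from the definition of a parallel distribution together with the decomposition formulas (\ref{e5})--(\ref{e8}). Recall that a distribution $D$ on $\mathcal{M}$ is \emph{parallel} with respect to a connection $\nabla$ precisely when $\nabla_{E}F\in\Gamma(D)$ for every $E\in\Gamma(T\mathcal{M})$ and every $F\in\Gamma(D)$; equivalently, the component of $\nabla_{E}F$ lying in the complementary distribution vanishes identically. Since $\nabla$ is $C^{\infty}(\mathcal{M})$-linear in its subscript slot, I would first reduce to pure arguments by writing an arbitrary $E$ as $E=\mathcal{V}E+\mathcal{H}E$ and appealing to the orthogonal splitting $T\mathcal{M}=\mathcal{V}(\mathcal{M})\oplus\mathcal{H}(\mathcal{M})$; it then suffices to test the defining condition separately on vertical and on horizontal foot-point fields.

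For the vertical distribution $\mathcal{V}$, take $F\in\Gamma\mathcal{V}(\mathcal{M})$ and examine $\nabla_{E}F$ in the two cases. When $E$ is vertical, (\ref{e5}) gives $\nabla_{E}F=\mathcal{T}_{E}F+\bar{\nabla}_{E}F$, where $\bar{\nabla}_{E}F$ is tangent to the fiber and hence vertical, so the horizontal obstruction is exactly $\mathcal{T}_{E}F$. When $E=X$ is horizontal, (\ref{e7}) gives $\nabla_{X}F=\mathcal{A}_{X}F+\mathcal{V}\nabla_{X}F$, whose horizontal part is $\mathcal{A}_{X}F$. Thus $\mathcal{H}\nabla_{E}F=0$ for all $E$ if and only if both $\mathcal{T}_{E}F$ and $\mathcal{A}_{X}F$ vanish, which is the asserted criterion. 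Replacing $(\nabla,\mathcal{T},\mathcal{A})$ by $(\nabla^{\ast},\mathcal{T}^{\ast},\mathcal{A}^{\ast})$ throughout and invoking the starred forms of (\ref{e5}) and (\ref{e7}) yields the claim for $\nabla^{\ast}$ verbatim.

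The horizontal case is the mirror image. For $X\in\Gamma\mathcal{H}(\mathcal{M})$ I would again split the foot-point field: when $E=V$ is vertical, (\ref{e6}) gives $\nabla_{V}X=\mathcal{T}_{V}X+\mathcal{H}\nabla_{V}X$, so the vertical obstruction is $\mathcal{T}_{V}X$; when $E=Y$ is horizontal, (\ref{e8}) gives $\nabla_{Y}X=\mathcal{H}\nabla_{Y}X+\mathcal{A}_{Y}X$, with vertical part $\mathcal{A}_{Y}X$. Hence $\mathcal{H}$ is parallel with respect to $\nabla$ exactly when $\mathcal{T}_{V}X$ and $\mathcal{A}_{Y}X$ vanish, and the $\nabla^{\ast}$ statement follows by the identical computation with starred tensors.

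There is no genuine analytic difficulty here: the content is simply that the O'Neill-type tensors $\mathcal{T}$ and $\mathcal{A}$ are, by their defining equations (\ref{e1}) and (\ref{e2}), exactly the pieces of $\nabla$ that push a field out of its home distribution, so the decomposition formulas isolate the obstructions for free. The one point I would flag as demanding care is conceptual rather than computational: because a statistical connection is not metric, the defining relation (\ref{ss1}) yields $g(\nabla_{E}F,X)=-g(F,\nabla^{\ast}_{E}X)$ for vertical $F$ and horizontal $X$, so the parallelism of $\mathcal{V}$ with respect to $\nabla$ is tied not to the parallelism of $\mathcal{H}$ with respect to the same $\nabla$, but to that of $\mathcal{H}$ with respect to the conjugate $\nabla^{\ast}$. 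This is precisely why the parallelism of $\mathcal{V}$ and of $\mathcal{H}$ must be recorded as independent assertions and why the starred criteria necessarily accompany the unstarred ones.
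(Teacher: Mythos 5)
Your argument is correct and is exactly the route the paper intends: Theorem \ref{t1} is stated there as an immediate consequence of the decompositions (\ref{e5})--(\ref{e8}) with no written proof, and your component-by-component reading of those formulas (splitting the differentiating field into vertical and horizontal parts and identifying $\mathcal{T}$ and $\mathcal{A}$ as the off-distribution obstructions) supplies precisely the intended details, in fact proving the stronger ``if and only if.'' Your closing remark that $g(\nabla_{E}F,X)=-g(F,\nabla^{\ast}_{E}X)$ links parallelism of $\mathcal{V}$ under $\nabla$ to parallelism of $\mathcal{H}$ under the conjugate connection is also correct and consistent with (\ref{ss3}).
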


Considering $(M,g)$ as a Ricci-Bourguignon soliton, from (\ref{rb2}) we have
\begin{equation}\label{r1}
(\mathcal{L}_{V}g)(E,F)+2Ric(E,F)+(2\lambda-\rho R)g(E,F)=0,
\end{equation}
for any $E,F\in\Gamma \mathcal{V}(\mathcal{M})$. Using (\ref{c3}) and (\ref{1}), we have
\begin{equation}\label{r2}
\frac{1}{2}\left[g(\nabla_{E}V,F)+g(\nabla_{F}V,E)\right]+\bar{Ric}(E,F)-g(\mathcal{T}_{E}F,N^{\ast})
\end{equation}
\begin{equation}\nonumber
+\sum^{n}_{i=1}[g((\nabla_{X_{i}}\mathcal{T})({E},F),X_{i})+g(\mathcal{A}_{X_{i}}E,\mathcal{A}^{\ast}_{X_{i}}F)]-\sum^{m}_{j=1}g((\nabla^{\ast}_{E_{j}}\mathcal{A})({X_{i}},X),V) \end{equation}
\begin{equation}\nonumber
+\Big(\lambda-\rho \Big\{ \bar{R}- g(\mathcal{A}, \mathcal{A}^{\ast}) - g(N, N^{\ast})
\end{equation}
\begin{equation}\nonumber
- \hat{\delta}N-\hat{\delta}^{\ast}N^{\ast} - \bar{\delta}\sigma + \bar{\delta}^{\ast}\sigma+\left\|\sigma^{2}\right\|\Big\}\Big)g(E,F)=0,
\end{equation}
where $\left\{X_{i}\right\}_{1 \leq i \leq n }$ denotes an orthonormal basis of the horizontal distribution $\mathcal{H}$. Then, from Theorem \ref{t1} and equations (\ref{e2}), (\ref{e5}), we turn up the following expression:
\begin{equation}\label{r3}
\frac{1}{2}[\bar{g}(\bar{\nabla}_{E}V,F)+\bar{g}(\bar{\nabla}_{F}V,E)]+\bar{Ric}(E,F)+(\Lambda-\rho\bar {R})\bar{g}(E,F)=0,
\end{equation}
where $\Lambda= \lambda+\rho[g(\mathcal{A}, \mathcal{A}^{\ast}) + g(N, N^{\ast})+\hat{\delta}N+\hat{\delta}^{\ast}N^{\ast}]$.
Thus, we have the following:

\begin{theorem}\label{St2}
Let $(\mathcal{M},g,V,\lambda,\rho)$ be a Ricci-Bourguignon soliton with vertical potential vector field $V$ and let $\psi:(\mathcal{M},\nabla, g)\longrightarrow(\mathcal{N},\hat{\nabla},\hat{g})$ be a statistical submersion. If the vertical distribution $\mathcal{V}$ is parallel, then any fiber of the statistical submersion $\psi$ is a Ricci-Bourguignon soliton which satisfies (\ref{r3}).
\end{theorem}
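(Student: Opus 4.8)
The plan is to reduce the full Ricci--Bourguignon soliton equation on the total space $\mathcal{M}$ to an intrinsic soliton equation on a generic fiber $\bar{\mathcal{M}}$, by exploiting the hypothesis that the vertical distribution $\mathcal{V}$ is parallel. First I would start from the soliton equation restricted to vertical inputs, namely equation (\ref{r1}), and substitute the expression for $Ric(E,F)$ coming from Theorem \ref{p3} (equation (\ref{c3})), together with the decomposition of the total scalar curvature $R$ provided by (\ref{1}); this is exactly the long displayed identity (\ref{r2}). The point of the manoeuvre is that every term in (\ref{r2}) is now expressed through fiber data ($\bar{Ric}$, $\bar{R}$), through the O'Neill tensors $\mathcal{T},\mathcal{A}$ and their conjugates, and through the potential $V$.

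Second, I would invoke the parallelism hypothesis via Theorem \ref{t1}. Since $\mathcal{V}$ is parallel with respect to $\nabla$, the horizontal parts $\mathcal{T}_F H$ and $\mathcal{A}_X F$ (for the relevant vertical/horizontal fields) vanish, so the mixed terms $g(\mathcal{T}_E F, N^\ast)$, $g(\mathcal{A}_{X_i}E,\mathcal{A}^\ast_{X_i}F)$, and the divergence-type term $\sum_j g((\nabla^\ast_{E_j}\mathcal{A})(X_i,X),V)$ drop out. Simultaneously, using (\ref{e5}) the covariant derivative $\nabla_E V$ of the vertical potential $V$ reduces to its intrinsic part $\bar{\nabla}_E V$ (the term $\mathcal{T}_E V$ being the horizontal part which vanishes, or at least pairs trivially), so the Lie-derivative term collapses to $\tfrac{1}{2}[\bar{g}(\bar{\nabla}_E V,F)+\bar{g}(\bar{\nabla}_F V,E)]$. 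What remains of the $\rho R$-contribution is the purely fiber scalar $\bar{R}$ together with a collection of scalar constants built from $\mathcal{A},N$; these constants I would absorb into the shifted soliton constant $\Lambda = \lambda+\rho[g(\mathcal{A},\mathcal{A}^\ast)+g(N,N^\ast)+\hat{\delta}N+\hat{\delta}^\ast N^\ast]$. Collecting the surviving terms yields precisely equation (\ref{r3}), which is the defining equation (\ref{rb2}) of a Ricci--Bourguignon soliton on the fiber $(\bar{\mathcal{M}},\bar{g},V,\Lambda,\rho)$.

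The main obstacle I anticipate is \emph{bookkeeping}, not any deep idea: one must carefully track which of the many $\mathcal{A}$- and $\mathcal{T}$-terms genuinely vanish under the parallelism hypothesis and which merely recombine into the scalar constant $\Lambda$. In particular, the hypothesis of Theorem \ref{t1} guarantees the vanishing of the horizontal components $\mathcal{T}_F H$ and $\mathcal{A}_X F$, but the symmetric scalar quantities $g(\mathcal{A},\mathcal{A}^\ast)$ and $g(N,N^\ast)$ arising from the $\rho R$ term need not vanish; the care lies in recognising that these are globally constant (or at least act as scalars multiplying $\bar{g}(E,F)$) so that they may legitimately be shifted into $\Lambda$ rather than treated as extra tensorial obstructions to the soliton structure. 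A secondary subtlety is verifying that $\bar{\nabla}$ is indeed the induced Levi-Civita/statistical connection on the fiber compatible with $\bar{g}$, so that (\ref{r3}) is a \emph{bona fide} soliton equation in the sense of (\ref{rb2}); this follows from the remarks in Section \ref{sect_2} that $\bar{\nabla}$ and $\bar{\nabla}^\ast$ are torsion-free and mutually conjugate with respect to $g$ on each fiber.

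Once (\ref{r3}) is obtained, the conclusion is immediate: comparing (\ref{r3}) with the soliton equation (\ref{rb2}) shows that each fiber, equipped with the restricted data and the adjusted constant $\Lambda$, is itself a Ricci--Bourguignon soliton, which is exactly the assertion of the theorem.
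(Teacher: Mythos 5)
Your proposal follows essentially the same route as the paper: restrict the soliton equation (\ref{rb2}) to vertical fields to get (\ref{r1}), substitute (\ref{c3}) and the scalar-curvature decomposition (\ref{1}) to arrive at (\ref{r2}), then invoke Theorem \ref{t1} together with (\ref{e5}) to annihilate the mixed $\mathcal{T}$- and $\mathcal{A}$-terms, reduce $\nabla_{E}V$ to $\bar{\nabla}_{E}V$, and absorb the surviving scalars into $\Lambda=\lambda+\rho[g(\mathcal{A},\mathcal{A}^{\ast})+g(N,N^{\ast})+\hat{\delta}N+\hat{\delta}^{\ast}N^{\ast}]$, yielding (\ref{r3}). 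Your bookkeeping remarks, in particular that $g(\mathcal{A},\mathcal{A}^{\ast})$ and $g(N,N^{\ast})$ are shifted into the soliton constant rather than required to vanish, coincide with the paper's own treatment.
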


Now, in a similar fashion, using (\ref{r3}), we state the following corollaries:

\begin{corollary}\label{Stc1}
Let $(\mathcal{M},g,V,\lambda,\rho=\frac{1}{2})$ be an Einstein soliton with vertical potential vector field $V$ and let $\psi:(\mathcal{M},\nabla, g)\longrightarrow(\mathcal{N},\hat{\nabla},\hat{g})$ be a statistical submersion. If the vertical distribution $\mathcal{V}$ is parallel, then any fiber of the statistical submersion $\psi$ is an Einstein soliton which satisfies
\begin{equation}\label{c1c}
\frac{1}{2}[\bar{g}(\bar{\nabla}_{E}V,F)+\bar{g}(\bar{\nabla}_{F}V,E)]+\bar{Ric}(E,F)+\left(\Lambda-\frac{\bar {R}}{2}\right)\bar{g}(E,F)=0.
\end{equation}
\end{corollary}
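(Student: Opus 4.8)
The plan is to obtain this corollary as a direct specialization of Theorem \ref{St2} to the parameter value $\rho = \frac{1}{2}$, since an Einstein soliton is, by item (i) of the classification recorded in Section \ref{intro}, precisely a Ricci-Bourguignon soliton for which $\rho = \frac{1}{2}$. First I would verify that all hypotheses of Theorem \ref{St2} are satisfied: by assumption $(\mathcal{M}, g, V, \lambda, \rho = \frac{1}{2})$ is a Ricci-Bourguignon soliton whose potential vector field $V$ is vertical, $\psi$ is a statistical submersion, and the vertical distribution $\mathcal{V}$ is parallel. These are exactly the standing assumptions of Theorem \ref{St2}, so I may conclude immediately that every fiber of $\psi$ is a Ricci-Bourguignon soliton satisfying equation (\ref{r3}).

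Next I would substitute $\rho = \frac{1}{2}$ into equation (\ref{r3}). The only term carrying $\rho$ explicitly is the soliton term $(\Lambda - \rho\bar{R})\bar{g}(E,F)$, which becomes $\left(\Lambda - \frac{\bar{R}}{2}\right)\bar{g}(E,F)$, while the kinetic term $\frac{1}{2}[\bar{g}(\bar{\nabla}_{E}V,F) + \bar{g}(\bar{\nabla}_{F}V,E)]$ and the fiber Ricci term $\bar{Ric}(E,F)$ are unaffected. This produces exactly equation (\ref{c1c}). The constant $\Lambda = \lambda + \rho[g(\mathcal{A}, \mathcal{A}^{\ast}) + g(N, N^{\ast}) + \hat{\delta}N + \hat{\delta}^{\ast}N^{\ast}]$ continues to be carried as a single symbol, so no expansion of $\Lambda$ is needed in the stated identity.

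Finally, I would interpret the resulting equation on the fiber: a fiber satisfying a Ricci-Bourguignon soliton equation with $\rho = \frac{1}{2}$ is, by the same classification, an Einstein soliton on the fiber, which is the claimed conclusion. I do not expect any genuine obstacle here, since the statement is purely a specialization obtained by inserting a fixed value of $\rho$ into the already-established Theorem \ref{St2}. The only point requiring a moment of care is confirming that the normalization $\rho = \frac{1}{2}$ is consistent on both the total space and the fiber; this holds because $\rho$ is a fixed constant that is transported unchanged through the computation leading to (\ref{r3}), so the soliton on each fiber inherits the same $\rho$ and hence the same Einstein normalization.
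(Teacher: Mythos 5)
Your proposal is correct and matches the paper's own treatment: the paper derives Corollary \ref{Stc1} exactly as you do, by invoking Theorem \ref{St2} and substituting $\rho=\frac{1}{2}$ into equation (\ref{r3}), with $\Lambda$ carried as a single symbol. No further commentary is needed.
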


\begin{corollary}\label{Stc2}
Let $(\mathcal{M},g,V,\lambda,\rho=\frac{1}{2(n-1)})$ be a Schouten soliton  with vertical potential vector field $V$ and let $\psi:(\mathcal{M},\nabla, g)\longrightarrow(\mathcal{N},\hat{\nabla},\hat{g})$ be a statistical submersion. If the vertical distribution $\mathcal{V}$ is parallel, then any fiber of the statistical submersion $\psi$ is a Schouten soliton  which satisfies
\begin{equation}\label{c2c}
\frac{1}{2}[\bar{g}(\bar{\nabla}_{E}V,F)+\bar{g}(\bar{\nabla}_{F}V,E)]+\bar{Ric}(E,F)+\left(\Lambda-\frac{\bar {R}}{2(n-1)}\right)\bar{g}(E,F)=0.
\end{equation}
\end{corollary}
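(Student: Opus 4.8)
The plan is to obtain Corollary \ref{Stc2} as an immediate specialization of Theorem \ref{St2}, whose derivation has already established that, under the stated hypotheses, each fiber inherits a Ricci--Bourguignon soliton structure governed by equation (\ref{r3}). No new geometric input is required; the work reduces to fixing the value of $\rho$.

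First I would recall that a Schouten soliton is precisely a Ricci--Bourguignon soliton whose defining constant is $\rho=\frac{1}{2(n-1)}$, as recorded in case (iii) of the classification following equation (\ref{rb1}). Since the hypothesis of the corollary places us exactly in this case, with the additional assumptions that the potential field $V$ is vertical and that $\mathcal{V}$ is parallel, every hypothesis of Theorem \ref{St2} is met. Applying that theorem yields that each fiber $\bar{\mathcal{M}}$ of $\psi$ is a Ricci--Bourguignon soliton obeying
\begin{equation}\nonumber
\frac{1}{2}[\bar{g}(\bar{\nabla}_{E}V,F)+\bar{g}(\bar{\nabla}_{F}V,E)]+\bar{Ric}(E,F)+(\Lambda-\rho\bar{R})\bar{g}(E,F)=0,
\end{equation}
with $\Lambda=\lambda+\rho[g(\mathcal{A},\mathcal{A}^{\ast})+g(N,N^{\ast})+\hat{\delta}N+\hat{\delta}^{\ast}N^{\ast}]$.

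The remaining step is the substitution $\rho=\frac{1}{2(n-1)}$ into this identity: the coefficient $\rho\bar{R}$ becomes $\frac{\bar{R}}{2(n-1)}$, which converts (\ref{r3}) verbatim into the claimed equation (\ref{c2c}). Because the substitution preserves the soliton form and carries the same constant $\rho$ onto the fiber, the fiber is a Schouten soliton, completing the argument.

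I expect no genuine obstacle here, since the substantive content—converting the ambient soliton condition on vertical vectors into the fiber equation by means of the curvature relation (\ref{c3}), the scalar-curvature identity (\ref{1}), and the parallelism of $\mathcal{V}$—was already carried out in producing (\ref{r3}) and Theorem \ref{St2}. The one point I would verify is that the shifted constant $\Lambda$ retains its meaning once $\rho$ is fixed at $\frac{1}{2(n-1)}$, so that the expanding/steady/shrinking nature of the induced fiber soliton is read off correctly from the sign of $\Lambda-\rho\bar{R}$ rather than from $\lambda$ alone.
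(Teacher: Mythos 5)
Your proposal is correct and matches the paper's own (implicit) argument: the paper derives Corollary \ref{Stc2} exactly as you do, by specializing Theorem \ref{St2} and equation (\ref{r3}) to the Schouten case $\rho=\frac{1}{2(n-1)}$, with $\Lambda=\lambda+\rho[g(\mathcal{A},\mathcal{A}^{\ast})+g(N,N^{\ast})+\hat{\delta}N+\hat{\delta}^{\ast}N^{\ast}]$ evaluated at that fixed $\rho$. Your closing check on the meaning of $\Lambda$ after fixing $\rho$ is a sensible precaution but introduces nothing beyond the paper's route.
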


\begin{corollary}\label{Stc3}
Let $(\mathcal{M},g,V,\lambda,\rho=0)$ be a Ricci soliton with vertical potential vector field $V$ and let $\psi:(\mathcal{M},\nabla, g)\longrightarrow(\mathcal{N},\hat{\nabla},\hat{g})$ be a statistical submersion. If the vertical distribution $\mathcal{V}$ is parallel, then any fiber of the statistical submersion $\psi$ is a Ricci soliton which satisfies
\begin{equation}\label{c3c}
\frac{1}{2}[\bar{g}(\bar{\nabla}_{E}V,F)+\bar{g}(\bar{\nabla}_{F}V,E)]+\bar{Ric}(E,F)+\Lambda\bar{g}(E,F)=0.
\end{equation}
\end{corollary}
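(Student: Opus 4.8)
The plan is to obtain this statement as an immediate specialization of Theorem \ref{St2}. Recall from item (iv) of the classification in Section \ref{intro} that a Ricci soliton is precisely a Ricci-Bourguignon soliton with $\rho=0$; thus the hypothesis that $(\mathcal{M},g,V,\lambda,\rho=0)$ is a Ricci soliton means exactly that $(\mathcal{M},g)$ carries a Ricci-Bourguignon soliton structure (\ref{rb2}) with $\rho=0$ and vertical potential $V$. Since the vertical distribution $\mathcal{V}$ is assumed parallel, the hypotheses of Theorem \ref{St2} are met, so I may assert that every fiber satisfies the reduced soliton equation (\ref{r3}).

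First I would write out (\ref{r3}) together with the accompanying definition $\Lambda= \lambda+\rho[g(\mathcal{A}, \mathcal{A}^{\ast}) + g(N, N^{\ast})+\hat{\delta}N+\hat{\delta}^{\ast}N^{\ast}]$ and simply set $\rho=0$. The term $-\rho\bar{R}\,\bar{g}(E,F)$ appearing in (\ref{r3}) then vanishes, and since every correction term collected in $\Lambda$ carries an explicit factor of $\rho$, one finds $\Lambda=\lambda$. Substituting these two observations into (\ref{r3}) yields exactly (\ref{c3c}), which completes the computation.

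It remains to interpret the resulting identity geometrically, namely to confirm that (\ref{c3c}) is genuinely the defining equation of a Ricci soliton on the fiber $(\bar{\mathcal{M}},\bar{g})$. Here I would note that, exactly as in the passage from (\ref{rb2}) to (\ref{r1})--(\ref{r2}) on the total space, the symmetric expression $\frac{1}{2}[\bar{g}(\bar{\nabla}_{E}V,F)+\bar{g}(\bar{\nabla}_{F}V,E)]$ equals $\frac{1}{2}(\mathcal{L}_{V}\bar{g})(E,F)$, so that (\ref{c3c}) reads $\frac{1}{2}(\mathcal{L}_{V}\bar{g})+\bar{Ric}+\Lambda\bar{g}=0$ on each fiber. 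This is precisely (\ref{rb2}) with $\rho=0$ for the triple $(\bar{\mathcal{M}},\bar{g},V)$ and soliton constant $\Lambda$, so each fiber is a Ricci soliton.

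I do not anticipate a genuine obstacle here, as the argument is a direct specialization; the only point that warrants a line of care is the bookkeeping for $\Lambda$, confirming that none of its constituent terms survives when $\rho=0$, so that the fiber's soliton constant coincides with $\lambda=\Lambda$ rather than inheriting any of the extrinsic O'Neill-type corrections $g(\mathcal{A},\mathcal{A}^{\ast})$, $g(N,N^{\ast})$, and the divergence terms.
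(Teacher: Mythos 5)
Your proposal is correct and follows essentially the same route as the paper, which obtains this corollary (like Corollaries \ref{Stc1} and \ref{Stc2}) by specializing $\rho$ in equation (\ref{r3}) of Theorem \ref{St2}. Your explicit bookkeeping that $\Lambda=\lambda$ when $\rho=0$ is a small but worthwhile clarification, since the paper's statement retains the symbol $\Lambda$ in (\ref{c3c}) even though all of its extrinsic correction terms carry a factor of $\rho$ and therefore vanish in this case.
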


Also, for the dual case, we have the following theorem:

\begin{theorem}\label{St3}
Let $(\mathcal{M},g,V,\lambda,\rho)$ be a Ricci-Bourguignon soliton with vertical potential vector field $V$ and let $\psi:(\mathcal{M},\nabla^{\ast}, g)\longrightarrow(\mathcal{N},\hat{\nabla}^{\ast},\hat{g})$ be a statistical submersion. If the vertical distribution $\mathcal{V}$ is parallel, then any fiber of the statistical submersion $\psi$ is a Ricci-Bourguignon soliton which satisfies
\begin{equation}\label{Sr3}
\frac{1}{2}[\bar{g}(\bar{\nabla}^{\ast}_{E}V,F)+\bar{g}(\bar{\nabla}^{\ast}_{F}V,E)]+\bar{Ric}^{\ast}(E,F)+(\Lambda-\rho\bar {R}^{\ast})\bar{g}(E,F)=0.
\end{equation}
\end{theorem}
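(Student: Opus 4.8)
The plan is to run the argument establishing Theorem \ref{St2} essentially verbatim, with every geometric object replaced by its $g$-conjugate. This is legitimate because, as recalled in Section \ref{sect_2}, $(\mathcal{M},\nabla^{\ast},g)$ is itself a statistical manifold with $(\nabla^{\ast})^{\ast}=\nabla$. Consequently the entire apparatus of Section \ref{sect_3} — the Ricci decomposition of Theorem \ref{p3}, the scalar-curvature identity (\ref{1}), and the parallelism criterion of Theorem \ref{t1} — applies equally to the submersion $\psi:(\mathcal{M},\nabla^{\ast},g)\longrightarrow(\mathcal{N},\hat{\nabla}^{\ast},\hat{g})$ after the substitution $\nabla\leftrightarrow\nabla^{\ast}$, $\mathcal{T}\leftrightarrow\mathcal{T}^{\ast}$, $\mathcal{A}\leftrightarrow\mathcal{A}^{\ast}$, $N\leftrightarrow N^{\ast}$, $\hat{\delta}\leftrightarrow\hat{\delta}^{\ast}$, $\bar{\nabla}\leftrightarrow\bar{\nabla}^{\ast}$, $\bar{Ric}\leftrightarrow\bar{Ric}^{\ast}$, $\bar{R}\leftrightarrow\bar{R}^{\ast}$, together with $Ric\leftrightarrow Ric^{\ast}$ and $R\leftrightarrow R^{\ast}$ for the total space.

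First I would restrict the soliton equation, understood in the conjugate setting relative to $Ric^{\ast}$ and $R^{\ast}$, to a vertical pair $E,F\in\Gamma\mathcal{V}(\mathcal{M})$, obtaining the conjugate of (\ref{r1}),
\[
(\mathcal{L}_{V}g)(E,F)+2Ric^{\ast}(E,F)+(2\lambda-\rho R^{\ast})g(E,F)=0.
\]
Then I would insert the dual form of the Ricci decomposition (\ref{c3}), expressing $Ric^{\ast}(E,F)$ through $\bar{Ric}^{\ast}(E,F)$, the term $g(\mathcal{T}^{\ast}_{E}F,N)$, a horizontal divergence, and the conjugate $\mathcal{A}$-cross terms, together with the dual of the global identity (\ref{1}) for $R^{\ast}$. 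This reproduces the conjugate of the expanded relation (\ref{r2}).

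The decisive collapse comes from the hypothesis that $\mathcal{V}$ is parallel. By the $\nabla^{\ast}$-version of Theorem \ref{t1}, the horizontal parts $\mathcal{T}^{\ast}_{F}H$ and $\mathcal{A}^{\ast}_{X}F$ vanish, so the conjugate of (\ref{e5}) gives $\nabla^{\ast}_{E}V=\bar{\nabla}^{\ast}_{E}V$ for the vertical potential, and every mixed $\mathcal{A}^{\ast}$- and $\mathcal{T}^{\ast}$-contribution attached to the vertical pair $(E,F)$ drops out via the conjugate of (\ref{e7}). Absorbing the surviving curvature corrections into the shift constant then reduces the equation exactly to (\ref{Sr3}), with the same constant $\Lambda=\lambda+\rho[g(\mathcal{A},\mathcal{A}^{\ast})+g(N,N^{\ast})+\hat{\delta}N+\hat{\delta}^{\ast}N^{\ast}]$.

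The one point demanding genuine care — and the step I expect to be the main obstacle — is verifying that the additive constant is \emph{unchanged} by conjugation, i.e.\ that it returns as $\Lambda$ rather than some asymmetric $\Lambda^{\ast}$. This succeeds precisely because each correction surviving in $\Lambda$ is built from a \emph{symmetric} conjugate pairing: $g(\mathcal{A},\mathcal{A}^{\ast})=g(\mathcal{A}^{\ast},\mathcal{A})$, $g(N,N^{\ast})=g(N^{\ast},N)$, and $\hat{\delta}N+\hat{\delta}^{\ast}N^{\ast}$ is manifestly invariant under $\nabla\leftrightarrow\nabla^{\ast}$. Accordingly, I would track carefully that the non-symmetric terms in (\ref{1}) — those proportional to $g(\mathcal{A},\mathcal{A})$, $g(\mathcal{T},\mathcal{T}^{\ast})$, and the $\sigma$-divergences $\bar{\delta}\sigma$ and $\bar{\delta}^{\ast}\sigma$ — are exactly the ones annihilated by the parallelism hypothesis, so that no residual conjugate asymmetry survives into the fiber soliton equation (\ref{Sr3}).
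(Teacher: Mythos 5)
Your proposal is correct and follows exactly the route the paper intends: the paper proves Theorem \ref{St2} via (\ref{r1})--(\ref{r3}) and then states Theorem \ref{St3} as the dual case with no separate argument, which is precisely your conjugation $\nabla\leftrightarrow\nabla^{\ast}$, $\mathcal{T}\leftrightarrow\mathcal{T}^{\ast}$, $\mathcal{A}\leftrightarrow\mathcal{A}^{\ast}$, $N\leftrightarrow N^{\ast}$ applied to that proof. Your extra verification that the shift constant returns as the same $\Lambda$ --- because $g(\mathcal{A},\mathcal{A}^{\ast})$, $g(N,N^{\ast})$ and $\hat{\delta}N+\hat{\delta}^{\ast}N^{\ast}$ are invariant under the swap while the asymmetric terms of (\ref{1}) are removed by the parallelism hypothesis via Theorem \ref{t1} --- is a point the paper leaves implicit, and you supply it correctly.
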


\begin{remark}\label{R1}
Similarly, we can also obtain the corresponding corollaries for the dual case.
\end{remark}

\begin{theorem}\label{TS2}
Let $(\mathcal{M},g,V,\lambda,\rho)$ be a Ricci-Bourguignon soliton with the potential vector field $V$ and let $\psi:(\mathcal{M},\nabla, g) \longrightarrow (\mathcal{N},\hat{\nabla},\hat{g})$ be a statistical submersion. If the horizontal distribution $\mathcal{H}$ is parallel, then the followings are fulfilled
\begin{enumerate}
\item{} if the potential vector field $V$ is vertical, then $(\mathcal{N},\hat{\nabla},\hat{g})$ is an Einstein manifold,
\item{} if the potential vector field $V$ is horizontal, then $(\mathcal{N},\hat{\nabla},\hat{g})$ is a Ricci-Bour\-gui\-gnon soliton with potential vector field $V^{'} = \psi_{\ast}V.$
\end{enumerate}
\end{theorem}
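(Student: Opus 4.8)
The plan is to restrict the Ricci--Bourguignon soliton equation (\ref{rb2}) to horizontal vector fields $X,Y\in\Gamma\mathcal{H}(\mathcal{M})$ and then push the resulting identity forward to $\mathcal{N}$ along $\psi_{\ast}$. So I would start from
$$\tfrac{1}{2}(\mathcal{L}_{V}g)(X,Y)+Ric(X,Y)+(\lambda-\rho R)g(X,Y)=0$$
and first extract the consequences of the standing hypothesis. By Theorem \ref{t1}, parallelism of $\mathcal{H}$ with respect to $\nabla$ forces the vertical parts $\mathcal{T}_{V}X$ and $\mathcal{A}_{X}Y$ to vanish for all $X,Y\in\Gamma\mathcal{H}(\mathcal{M})$ and $V\in\Gamma\mathcal{V}(\mathcal{M})$, i.e. $\mathcal{H}$ is integrable and each fibre has vanishing second fundamental form in the horizontal directions.

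Next I would upgrade these to the vanishing of the full tensors. Using Lemma \ref{L1} ($\mathcal{A}_{X}Y=-\mathcal{A}^{\ast}_{Y}X$) together with the conjugacy relations (\ref{ss3}) and (\ref{ss1}), the identity $\mathcal{A}_{X}Y=0$ propagates to $\mathcal{A}=\mathcal{A}^{\ast}=0$ and $\sigma=\sum_{i}\mathcal{A}_{X_{i}}X_{i}=0$, while $\mathcal{T}_{V}X=0$ yields, through the dual pairing $g(\mathcal{T}_{V}X,W)=-g(X,\mathcal{T}^{\ast}_{V}W)$, the vanishing of the dual mean curvature $N^{\ast}=\sum_{j}\mathcal{T}^{\ast}_{E_{j}}E_{j}=0$ (equivalently, $\mathcal{V}$ is parallel with respect to $\nabla^{\ast}$). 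Substituting all of this into (\ref{1c4}) kills every correction term and leaves the clean identity $Ric(X,Y)=\hat{Ric}(X,Y)$; for basic $X,Y$ this pushes forward to $\hat{Ric}(\hat{X},\hat{Y})$, and $g(X,Y)=\hat{g}(\hat{X},\hat{Y})\circ\psi$ by the submersion property.

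The argument then splits on the position of $V$. If $V$ is \emph{vertical}, the horizontal part of $\nabla_{X}V$ is $\mathcal{A}_{X}V=\mathcal{H}\nabla_{X}V$; differentiating $g(V,Y)=0$ via (\ref{ss1}) and using $\mathcal{A}^{\ast}_{X}Y=0$ gives $\mathcal{A}_{X}V=0$, so the entire Lie-derivative term drops out. What remains is $\hat{Ric}(X,Y)+(\lambda-\rho R)g(X,Y)=0$, that is $\hat{Ric}=(\rho R-\lambda)\hat{g}$, the Einstein condition on $\mathcal{N}$. If $V$ is \emph{horizontal} and basic, then $g(\nabla_{X}V,Y)=g(\mathcal{H}\nabla_{X}V,Y)$, and the $\psi$-relatedness of $\mathcal{H}(\nabla_{X}V)$ to $\hat{\nabla}_{\hat{X}}\hat{V}$ shows that the symmetrized expression projects to $(\mathcal{L}_{V'}\hat{g})(\hat{X},\hat{Y})$ with $V'=\psi_{\ast}V$; the whole restricted equation then pushes forward to $\tfrac{1}{2}\mathcal{L}_{V'}\hat{g}+\hat{Ric}+(\lambda-\rho R)\hat{g}=0$, a Ricci--Bourguignon soliton on $\mathcal{N}$.

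The step I expect to be the main obstacle is twofold. First, one must justify that \emph{all} the O'Neill-type contributions in (\ref{1c4}) really disappear: the hypothesis controls $\nabla_{E}X$ for horizontal $X$ but says nothing directly about $\nabla_{E}V$, so the vanishing of $\mathcal{A}^{\ast}$ and of $N^{\ast}$ must be coaxed out of the conjugacy relations rather than read off, and this is the delicate bookkeeping. Second, the scalar curvature in the soliton constant is $R$ on the total space, not $\hat{R}$ on the base; to present case (2) as a genuine Ricci--Bourguignon soliton one must reconcile the two through (\ref{1}), which under the present hypotheses collapses to $R=\bar{R}+\hat{R}-\hat{\delta}N$, so that $\lambda-\rho R=\bigl(\lambda-\rho\bar{R}+\rho\hat{\delta}N\bigr)-\rho\hat{R}$ and the effective soliton constant on $\mathcal{N}$ becomes $\hat{\lambda}=\lambda-\rho(\bar{R}-\hat{\delta}N)$. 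Tracking this shift carefully, and confirming it is compatible with the claimed potential $V'=\psi_{\ast}V$, is where the real care is required.
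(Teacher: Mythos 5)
Your proposal is correct and follows essentially the same route as the paper's proof: restrict the soliton equation (\ref{rb2}) to horizontal fields, substitute the Ricci identity (\ref{c4})/(\ref{1c4}) together with the scalar relation (\ref{1}), invoke Theorem \ref{t1} for the parallel hypothesis, and then split into the vertical case (where $\mathcal{A}_{X}V=0$ kills the Lie-derivative term and yields the Einstein condition (\ref{Sc7})) and the horizontal case (yielding the soliton equation (\ref{Sc8}) with $V'=\psi_{\ast}V$). If anything, your bookkeeping is more careful than the paper's: you explicitly derive $\mathcal{A}=\mathcal{A}^{\ast}=0$, $\sigma=0$ and $N^{\ast}=0$ from Lemma \ref{L1} and the conjugacy relations (\ref{ss3}), so that (\ref{1}) collapses to $R=\bar{R}+\hat{R}-\hat{\delta}N$ and the shifted constant carries the $-\rho\bar{R}$ term, whereas the paper's $\Lambda$ in (\ref{Sc5}) retains terms ($2\|\mathcal{A}\|^{2}$, $g(\mathcal{T},\mathcal{T}^{\ast})$, $\hat{\delta}^{\ast}N^{\ast}$) that vanish under the hypothesis while silently omitting that $\bar{R}$ shift.
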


\begin{proof}
  Adopting (\ref{rb2}), (\ref{c4}) and (\ref{1}) we turn up
\begin{equation}\label{Sc4}
\frac{1}{2}\left[g(\nabla_{X}V,Y)+g(\nabla_{Y}V,X)\right]+\hat{Ric}(\hat{X},\hat{Y})+g(\nabla^{\ast}_{X}N^{\ast},Y) \end{equation}
\begin{equation}
\nonumber
-\sum^{m}_{j=1}g(\mathcal{T}_{E_{j}}{X},\mathcal{T}_{E_{j}}Y)+\sum^{n}_{i=1}g(\mathcal{A}_{X}{X_{i}},\mathcal{A}^{\ast}_{Y}{X_{i}})+\sum^{n}_{i=1}g((\nabla_{X_{i}}\mathcal{A})(X_{i}, X),Y)
\end{equation}
\begin{equation}
\nonumber
+\sum^{n}_{i=1}[g(\mathcal{A}_{X_i}X_{i}, A_{X}Y)-g(\mathcal{A}^{\ast}_{X}X_{i},\mathcal{A}^{\ast}_{X}X_{i})]
\end{equation}
\begin{equation}
\nonumber
+\Big(\lambda-\rho \Big\{ \hat{R}- 2\left\|\mathcal{A}\right\|^{2} - g(\mathcal{T}, \mathcal{T}^{\ast})
\end{equation}
\begin{equation}
\nonumber
-\hat{\delta}N -\hat{\delta}^{\ast}N^{\ast} - \bar{\delta}\sigma + \bar{\delta}^{\ast}\sigma -\left\|\sigma^{2}\right\|\Big\}\Big)g(X,Y) =0,
\end{equation}
where $\hat{X}$ and $\hat{Y}$ are $\psi$-related to $X$ and $Y$, respectively, for any $X,Y\in \Gamma\mathcal{H}(\mathcal{M})$.
Now, applying Theorem \ref{t1} to above equation (\ref{Sc4}), we turn up
\begin{equation}\label{Sc5}
\left[g(\nabla_{X}V,Y)+g(\nabla_{Y}V,X)\right]+2\hat{Ric}(\hat{X},\hat{Y})+(2\Lambda-\rho \hat{R})g(X,Y)=0,
\end{equation}
where $\Lambda=\lambda+\rho[2\left\|\mathcal{A}\right\|^{2} + g(\mathcal{T}, \mathcal{T}^{\ast})+ \hat{\delta}N +\hat{\delta}^{\ast}N^{\ast}]$.\\

\noindent
(1)\quad If the vector field $V$ is vertical, from (\ref{e7}) it follows that
\begin{equation}\label{Sc6}
\left[g(\mathcal{A}_{X}V,Y)+g(\mathcal{A}_{Y}V,X)\right]+2\hat{Ric}(\hat{X},\hat{Y})+(2\Lambda-\rho \hat{R})g(X,Y)=0.
\end{equation}
Since $\mathcal{H}$ is parallel, we find
\begin{equation}\label{Sc7}
\hat{Ric}(\hat{X},\hat{Y})+\left(\Lambda-\frac{\rho \hat{R}}{2}\right)g(X,Y)=0,
\end{equation}
which implies that $(\mathcal{N},\hat{\nabla},\hat{g})$ is Einstein.\\
\noindent
(2)\quad If the vector field $V$ is horizontal, from (\ref{Sc5}) we obtain
\begin{equation}\label{Sc8}
(\mathcal{L}_{V}g)(X,Y)+2\hat{Ric}(\hat{X},\hat{Y})+(2\Lambda-\rho \hat{R})g(X,Y)=0,
\end{equation}
which shows that the statistical manifold $(\mathcal{N},\hat{\nabla},\hat{g})$ is a Ricci-Bour\-guignon soliton with potential vector field $V^{'}=\psi_{\ast}V$.
\end{proof}

For the dual situation, we turn up the following theorem:

\begin{theorem}\label{TS3}
Let $(\mathcal{M},g,V,\lambda,\rho)$ be a Ricci-Bourguignon soliton with the potential vector field $V$ and let $\psi:(\mathcal{M},\nabla^{\ast}, g)\longrightarrow(\mathcal{N},\hat{\nabla}^{\ast},\hat{g})$ be a statistical submersion between statistical manifolds. If the horizontal distribution $\mathcal{H}$ is parallel, then  the followings are fulfilled:
\begin{enumerate}
\item{} if the potential vector field $V$ is vertical, then $(\mathcal{N},\hat{\nabla}^{\ast},\hat{g})$ is an Einstein manifold,
\item{} if the potential vector field $V$ is horizontal, then $(\mathcal{N},\hat{\nabla}^{\ast},\hat{g})$ is a Ricci-Bourguignon soliton with potential field $V^{'}=\psi_{\ast}V$.
\end{enumerate}
\end{theorem}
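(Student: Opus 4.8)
The plan is to exploit the full symmetry of the statistical-submersion machinery under the involution $\nabla \leftrightarrow \nabla^{\ast}$, so that Theorem \ref{TS3} reduces to a term-by-term dual of Theorem \ref{TS2}. Since ${(\nabla^{\ast})}^{\ast}=\nabla$, the triple $(\mathcal{M},\nabla^{\ast},g)$ is itself a statistical manifold, and the O'Neill tensors attached to the submersion $\psi:(\mathcal{M},\nabla^{\ast},g)\to(\mathcal{N},\hat{\nabla}^{\ast},\hat{g})$ are exactly $\mathcal{T}^{\ast}$ and $\mathcal{A}^{\ast}$, with $\mathcal{T}$ and $\mathcal{A}$ in the conjugate roles. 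Accordingly, both the horizontal Ricci decomposition (\ref{c4}) and the scalar-curvature identity (\ref{1}) admit dual counterparts obtained by starring every curvature and every O'Neill tensor, and it is these dual identities that I would substitute into the soliton equation.

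Concretely, I would start from the (dual) Ricci-Bourguignon soliton relation evaluated on horizontal fields $X,Y\in\Gamma\mathcal{H}(\mathcal{M})$, insert the starred form of (\ref{c4}) for $Ric^{\ast}(X,Y)$ and the starred form of (\ref{1}) for $R^{\ast}$, and so reach the dual analogue of (\ref{Sc4}). Invoking the hypothesis that $\mathcal{H}$ is parallel with respect to $\nabla^{\ast}$ (Theorem \ref{t1}) then kills every term built from $\mathcal{T}^{\ast}$, $\mathcal{A}^{\ast}$ and their covariant derivatives, collapsing the relation to the dual version of (\ref{Sc5}),
\begin{equation}\nonumber
\left[g(\nabla^{\ast}_{X}V,Y)+g(\nabla^{\ast}_{Y}V,X)\right]+2\hat{Ric}^{\ast}(\hat{X},\hat{Y})+(2\Lambda-\rho\hat{R}^{\ast})g(X,Y)=0,
\end{equation}
where $\Lambda$ is the auxiliary constant produced by the surviving $\rho$-terms, to be read off in the dual form $\Lambda=\lambda+\rho[2\|\mathcal{A}\|^{2}+g(\mathcal{T},\mathcal{T}^{\ast})+\hat{\delta}N+\hat{\delta}^{\ast}N^{\ast}]$.

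The two cases then split exactly as in Theorem \ref{TS2}. If $V$ is vertical, the dual of (\ref{e7}) gives $\nabla^{\ast}_{X}V=\mathcal{A}^{\ast}_{X}V+\mathcal{V}\nabla^{\ast}_{X}V$, and parallelism of $\mathcal{H}$ forces the horizontal component to vanish, so that $g(\nabla^{\ast}_{X}V,Y)=0$ for horizontal $Y$; the displayed identity then reduces to $\hat{Ric}^{\ast}(\hat{X},\hat{Y})+(\Lambda-\tfrac{\rho\hat{R}^{\ast}}{2})g(X,Y)=0$, which says $(\mathcal{N},\hat{\nabla}^{\ast},\hat{g})$ is Einstein. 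If instead $V$ is horizontal, the two connection terms reassemble into $(\mathcal{L}_{V}g)(X,Y)$, which descends under $\psi_{\ast}$, leaving $(\mathcal{L}_{V}g)(X,Y)+2\hat{Ric}^{\ast}(\hat{X},\hat{Y})+(2\Lambda-\rho\hat{R}^{\ast})g(X,Y)=0$ and exhibiting $(\mathcal{N},\hat{\nabla}^{\ast},\hat{g})$ as a Ricci-Bourguignon soliton with potential $V^{'}=\psi_{\ast}V$.

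The one place where real care is needed, rather than mechanical starring, is verifying that the dual forms of (\ref{c4}) and (\ref{1}) carry exactly the asserted pattern of stars; this rests on the conjugacy relations (\ref{ss2}) and (\ref{ss3}), on Lemma \ref{L1}, and on the skew-adjointness of $\mathcal{A}$ and $\mathcal{A}^{\ast}$. The main bookkeeping point is to track the mixed pairings: $g(\mathcal{T},\mathcal{T}^{\ast})$ and the pair $\hat{\delta}N+\hat{\delta}^{\ast}N^{\ast}$ are manifestly symmetric under starring, whereas the $\|\mathcal{A}\|^{2}$ term must be handled through the relation between $\mathcal{A}$ and $\mathcal{A}^{\ast}$ coming from (\ref{ss3}); once one confirms that these combine into the same constant $\Lambda$ as in Theorem \ref{TS2}, the remainder of the argument is routine and identical to the proof of that theorem.
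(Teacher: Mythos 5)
Your proposal is correct and is essentially the paper's own argument: the paper gives no separate proof of Theorem \ref{TS3}, stating it as the dual of Theorem \ref{TS2}, which is precisely the mechanical starring of (\ref{Sc4})--(\ref{Sc8}) under $\nabla\leftrightarrow\nabla^{\ast}$, $\mathcal{T}\leftrightarrow\mathcal{T}^{\ast}$, $\mathcal{A}\leftrightarrow\mathcal{A}^{\ast}$ that you carry out, including the case split into the Einstein conclusion for vertical $V$ and the descended soliton with potential $V^{'}=\psi_{\ast}V$ for horizontal $V$. Your closing caution about verifying the star pattern in the dual forms of (\ref{c4}) and (\ref{1}) via (\ref{ss2}), (\ref{ss3}) and Lemma \ref{L1} is, if anything, more careful than the paper, which simply asserts the dual statement without comment.
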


\section{Ricci-Bourguignon Solitons on Statistical Submersions with a Conformal Vector Field}\label{sect_6}

\begin{definition}\label{def1}
A smooth vector field $\zeta$ on a Riemannian manifold $(\mathcal{M},g)$ is said to a {\em conformal vector field} if there exists a smooth function $\varphi$ on $\mathcal{M}$ that satisfies \cite{conf}
\begin{equation}\label{D1}
\mathcal{L}_{\zeta}g=2\varphi g,
\end{equation}
where $\mathcal{L}_{\zeta}g$ is the Lie derivative of $\zeta$ with respect to $g$. If $\varphi=0$,  then $\zeta$ is called isometric (as well as  Killing vector field).
\end{definition}

\begin{theorem}\label{Dt1}
Let $(\mathcal{M},g,\zeta,\lambda,\rho)$ be a Ricci-Bourguignon soliton with conformal vector field $\zeta\in\nolinebreak[3] \Gamma T(\mathcal{M})$ and let $\psi:(\mathcal{M},\nabla, g)\longrightarrow(\mathcal{N},\hat{\nabla},\hat{g})$ be a statistical submersion. If the conformal vector field $\zeta$ is vertical and $(\varphi+\Lambda-\frac{\rho\bar{R}}{2})\neq 0$, then any fiber of the statistical submersion is Einstein with scalar curvature $\bar{R}=-\frac{m(\varphi+\Lambda)}{(1-\frac{m\rho}{2})}$.
\end{theorem}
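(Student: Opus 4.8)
The plan is to restrict the soliton equation to the fibre, turn the first-order term into a multiple of the metric via the conformal hypothesis, and then read off the Einstein condition together with its scalar curvature by a single trace. First I would evaluate the defining equation (\ref{rb2}) on two vertical vector fields $E,F\in\Gamma\mathcal{V}(\mathcal{M})$ to recover (\ref{r1}). Since $\zeta$ is conformal I replace $(\mathcal{L}_{\zeta}g)(E,F)$ by $2\varphi\,g(E,F)$; because $\zeta$, $E$ and $F$ are all vertical, $g$ here coincides with the induced fibre metric $\bar{g}$, so after dividing (\ref{r1}) by two the first-order term becomes simply $\varphi\,\bar{g}(E,F)$ and (\ref{r1}) reads
\begin{equation}\nonumber
\varphi\,g(E,F)+Ric(E,F)+\Big(\lambda-\tfrac{\rho R}{2}\Big)g(E,F)=0 .
\end{equation}

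Next I would substitute the fibre Gauss-type formula (\ref{1c3}) for $Ric(E,F)$ and the scalar-curvature decomposition (\ref{1}) for $R$, following the same computation that produced (\ref{r3}). All the O'Neill correction terms, namely $g(\mathcal{A},\mathcal{A}^{\ast})$, $g(N,N^{\ast})$, $\hat{\delta}N$ and $\hat{\delta}^{\ast}N^{\ast}$, are absorbed into the constant $\Lambda$ introduced right after (\ref{r3}), while the same mechanism used there makes the remaining horizontal parts of (\ref{1c3}) drop out. What survives is the purely intrinsic fibre identity
\begin{equation}\nonumber
\bar{Ric}(E,F)+\Big(\varphi+\Lambda-\tfrac{\rho\bar{R}}{2}\Big)\bar{g}(E,F)=0 ,
\end{equation}
which says precisely that each fibre is an Einstein statistical manifold, the Einstein factor being $-(\varphi+\Lambda-\tfrac{\rho\bar{R}}{2})$; the standing hypothesis $\varphi+\Lambda-\tfrac{\rho\bar{R}}{2}\neq0$ is exactly what certifies that this factor is nonzero, so the fibre is properly (non Ricci-flat) Einstein.

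Finally I would take the $\bar{g}$-trace of the last identity over an orthonormal basis $\{E_{j}\}_{1\le j\le m}$ of $\mathcal{V}(\mathcal{M})$. Using $\sum_{j}\bar{Ric}(E_{j},E_{j})=\bar{R}$ and $\sum_{j}\bar{g}(E_{j},E_{j})=m$, this gives $\bar{R}+m\big(\varphi+\Lambda-\tfrac{\rho\bar{R}}{2}\big)=0$, and solving the resulting linear relation $\bar{R}\big(1-\tfrac{m\rho}{2}\big)=-m(\varphi+\Lambda)$ for $\bar{R}$ yields the asserted value $\bar{R}=-\frac{m(\varphi+\Lambda)}{1-\frac{m\rho}{2}}$. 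I expect the main obstacle to be purely computational rather than conceptual: one has to make sure that, on vertical arguments, the statistical Lie-derivative term really descends to $2\varphi\,\bar{g}(E,F)$ on the fibre and, above all, that the coefficient of $\rho R$ is carried through the reduction with the correct factor $\tfrac12$, since it is precisely this factor that produces the denominator $1-\tfrac{m\rho}{2}$ rather than $1-m\rho$.
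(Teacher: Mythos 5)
Your proposal matches the paper's own proof essentially step for step: the paper likewise restricts the soliton equation to vertical arguments via Theorem \ref{St2} and equation (\ref{r3}), substitutes the conformal condition (\ref{D1}) to turn the Lie-derivative term into $2\varphi\,\bar{g}(E,F)$, arrives at exactly your Einstein identity (\ref{D3}), and contracts to obtain (\ref{D4}). The one caveat you flag implicitly affects the paper equally: invoking the reduction (\ref{r3}) silently uses the parallelism of the vertical distribution assumed in Theorem \ref{St2}, a hypothesis not restated in Theorem \ref{Dt1}, so you inherit it in the same way the authors do.
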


\begin{proof}
Since $(\mathcal{M},g,\zeta,\lambda,\rho)$ is a Ricci-Bourguignon soliton, for any $E,F\in \Gamma T(\mathcal{M})$, adopting Theorem \ref{St2} and equation (\ref{r3}), as well, we have
\begin{equation}\label{D2}
[\bar{g}(\bar{\nabla}_{E}V,F)+\bar{g}(\bar{\nabla}_{F}V,E)]+2\bar{Ric}(E,F)+(2\lambda-\rho\bar {R})\bar{g}(E,F)=0.
\end{equation}
Now, involving (\ref{D2}), (\ref{D1}) and (\ref{1}) we have
\begin{equation}\label{D3}
\bar{Ric}(E,F)+\left(\varphi+\Lambda-\frac{\rho\bar {R}}{2}\right)\bar{g}(E,F)=0,
\end{equation}
where $\Lambda= \lambda+\rho[g(\mathcal{A}, \mathcal{A}^{\ast}) + g(N, N^{\ast})+\hat{\delta}N+\hat{\delta}^{\ast}N^{\ast}]$, which shows that the fiber of $\psi$ is Einstein.  Contracting (\ref{D3}), we get
\begin{equation}\label{D4}
\bar{R}=-\frac{m(\varphi+\Lambda)}{(1-\frac{m\rho}{2})}.
\end{equation}
\end{proof}

\begin{corollary}\label{DCt1}
Let $(\mathcal{M},g,\zeta,\lambda,\rho)$ be a Ricci-Bourguignon soliton with Killing vector field $\zeta\in\Gamma T(\mathcal{M})$ and let $\psi:(\mathcal{M},\nabla, g)\longrightarrow(\mathcal{N},\hat{\nabla},\hat{g})$ be a statistical submersion. If the Killing vector field $\zeta$ is vertical and $(\Lambda-\frac{\rho\bar{R}}{2})\neq 0$, then any fiber of the statistical submersion is also Einstein with scalar curvature $\bar{R}=-\frac{m\Lambda}{(1-\frac{m\rho}{2})}$.
\end{corollary}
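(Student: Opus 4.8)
The plan is to derive Corollary \ref{DCt1} as the specialization $\varphi = 0$ of Theorem \ref{Dt1}. By Definition \ref{def1}, a Killing vector field is exactly a conformal vector field with vanishing conformal factor, so imposing that $\zeta$ be Killing amounts to setting $\varphi = 0$ in (\ref{D1}). The strategy is therefore to re-run the computation of Theorem \ref{Dt1} with this substitution and then trace the resulting Einstein equation.

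Concretely, I would start from the fiber relation (\ref{D3}) established in the proof of Theorem \ref{Dt1}, namely
\[
\bar{Ric}(E,F) + \left(\varphi + \Lambda - \frac{\rho\bar{R}}{2}\right)\bar{g}(E,F) = 0,
\]
with $\Lambda = \lambda + \rho[g(\mathcal{A},\mathcal{A}^{\ast}) + g(N,N^{\ast}) + \hat{\delta}N + \hat{\delta}^{\ast}N^{\ast}]$. Since $\Lambda$ does not depend on $\varphi$, setting $\varphi = 0$ leaves it untouched and gives
\[
\bar{Ric}(E,F) + \left(\Lambda - \frac{\rho\bar{R}}{2}\right)\bar{g}(E,F) = 0.
\]
Under the hypothesis $(\Lambda - \frac{\rho\bar{R}}{2}) \neq 0$, this exhibits each fiber as an Einstein statistical manifold with a genuinely nonzero Einstein coefficient.

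To pin down the scalar curvature, I would contract this last equation over an orthonormal basis $\{E_1,\dots,E_m\}$ of the $m$-dimensional vertical space. Using $\sum_{k} \bar{Ric}(E_k,E_k) = \bar{R}$ and $\sum_{k} \bar{g}(E_k,E_k) = m$, one obtains $\bar{R} + m\left(\Lambda - \frac{\rho\bar{R}}{2}\right) = 0$, hence $\bar{R}\left(1 - \frac{m\rho}{2}\right) = -m\Lambda$, and finally $\bar{R} = -\frac{m\Lambda}{1 - \frac{m\rho}{2}}$, exactly as asserted.

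I do not anticipate any substantive obstacle, since the corollary is a direct specialization of Theorem \ref{Dt1}. The only points worth checking are that the constant $\Lambda$ is insensitive to the passage from the conformal to the Killing setting (immediate from its definition, which does not involve $\varphi$), and that the closed form for $\bar{R}$ tacitly presupposes $1 - \frac{m\rho}{2} \neq 0$, so that the displayed expression is well defined.
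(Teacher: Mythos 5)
Your proposal is correct and follows exactly the paper's route: the corollary is obtained from Theorem \ref{Dt1} by setting $\varphi = 0$ in the fiber equation (\ref{D3}) and contracting over a vertical orthonormal frame to recover (\ref{D4}), just as in the proof of that theorem. Your added remark that the closed form for $\bar{R}$ tacitly requires $1 - \frac{m\rho}{2} \neq 0$ is a sensible observation that the paper leaves implicit.
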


Now, for dual case,  we also obtain the following:
\begin{theorem}\label{Dt2}
Let $(\mathcal{M},g,\zeta,\lambda,\rho)$ be a Ricci-Bourguignon soliton with conformal vector field $\zeta\in\nolinebreak[3]\Gamma T(\mathcal{M})$ and let $\psi:(\mathcal{M},\nabla^{\ast}, g)\longrightarrow(\mathcal{N},\hat{\nabla}^{\ast},\hat{g})$ be a statistical submersion. If the conformal vector field $\zeta$ is vertical and $(\varphi+\Lambda-\frac{\rho\bar{R}^{\ast}}{2})\neq 0$, then any fiber  of the statistical submersion is Einstein with scalar curvature $\bar{R}^{\ast}=-\frac{m(\varphi+\Lambda)}{(1-\frac{m\rho}{2})}$.
\end{theorem}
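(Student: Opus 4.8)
The plan is to run the argument of Theorem \ref{Dt1} verbatim in the conjugate setting, replacing the connection $\nabla$, the fiber connection $\bar{\nabla}$, and the curvatures $\bar{Ric},\bar{R}$ by their duals $\nabla^{\ast},\bar{\nabla}^{\ast},\bar{Ric}^{\ast},\bar{R}^{\ast}$, and invoking the dual fiber-soliton reduction of Theorem \ref{St3} in place of Theorem \ref{St2}. Writing $V=\zeta$, I treat $(\mathcal{M},g,\zeta,\lambda,\rho)$ as a Ricci-Bourguignon soliton with vertical potential field.

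First I would specialize Theorem \ref{St3}: since $\mathcal{V}$ is parallel and $V$ is vertical, each fiber is a Ricci-Bourguignon soliton for $\nabla^{\ast}$ and satisfies (\ref{Sr3}). Combining this with the soliton equation (\ref{rb2}) restricted to vertical arguments yields the conjugate analogue of (\ref{D2}),
\begin{equation}\nonumber
[\bar{g}(\bar{\nabla}^{\ast}_{E}V,F)+\bar{g}(\bar{\nabla}^{\ast}_{F}V,E)]+2\bar{Ric}^{\ast}(E,F)+(2\lambda-\rho\bar{R}^{\ast})\bar{g}(E,F)=0
\end{equation}
for all $E,F\in\Gamma\mathcal{V}(\mathcal{M})$.

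Next I would insert the conformal condition (\ref{D1}), $\mathcal{L}_{\zeta}g=2\varphi g$, whose restriction to vertical fields identifies the symmetrized $\bar{\nabla}^{\ast}$-term with $(\mathcal{L}_{V}g)(E,F)=2\varphi\bar{g}(E,F)$; the O'Neill correction terms produced by the dual scalar-curvature splitting (\ref{1}) are absorbed into the constant $\Lambda=\lambda+\rho[g(\mathcal{A},\mathcal{A}^{\ast})+g(N,N^{\ast})+\hat{\delta}N+\hat{\delta}^{\ast}N^{\ast}]$. This collapses the preceding display to the conjugate analogue of (\ref{D3}),
\begin{equation}\nonumber
\bar{Ric}^{\ast}(E,F)+\Big(\varphi+\Lambda-\frac{\rho\bar{R}^{\ast}}{2}\Big)\bar{g}(E,F)=0,
\end{equation}
which exhibits each fiber as Einstein with respect to $\nabla^{\ast}$. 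Finally, contracting over a $\bar{g}$-orthonormal vertical frame $\{E_{j}\}_{1\leq j\leq m}$ (so that $\sum_{j}\bar{Ric}^{\ast}(E_{j},E_{j})=\bar{R}^{\ast}$ and $\sum_{j}\bar{g}(E_{j},E_{j})=m$) gives $\bar{R}^{\ast}(1-\frac{m\rho}{2})=-m(\varphi+\Lambda)$, and solving under the stated nonvanishing hypothesis produces $\bar{R}^{\ast}=-\frac{m(\varphi+\Lambda)}{1-\frac{m\rho}{2}}$.

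I do not expect a genuine obstacle, since the statement is the exact conjugate mirror of Theorem \ref{Dt1}. The only point requiring care is the identification of the symmetrized $\bar{\nabla}^{\ast}$-derivative with the Lie derivative $\mathcal{L}_{V}g$ on vertical fields: because $\mathcal{L}_{V}g$ is purely metric and hence connection-independent, this is legitimate provided the dual metricity relation (\ref{ss1}) is used consistently, exactly as in the non-dual proof. One must also retain the nonvanishing condition $(\varphi+\Lambda-\frac{\rho\bar{R}^{\ast}}{2})\neq 0$ ensuring $1-\frac{m\rho}{2}\neq 0$, which is what permits solving for $\bar{R}^{\ast}$ in the last step.
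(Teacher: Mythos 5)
Your proposal is correct and coincides with the paper's own treatment: the paper proves Theorem \ref{Dt1} via (\ref{D2})--(\ref{D4}) and then states Theorem \ref{Dt2} as its conjugate, the intended proof being precisely the verbatim dualization (via Theorem \ref{St3} and (\ref{Sr3})) that you carry out. One small caveat: your closing claim that the hypothesis $(\varphi+\Lambda-\frac{\rho\bar{R}^{\ast}}{2})\neq 0$ \emph{ensures} $1-\frac{m\rho}{2}\neq 0$ does not actually follow (if $\rho=2/m$ the trace identity forces $\varphi+\Lambda=0$, and the hypothesis then only says $\rho\bar{R}^{\ast}\neq 0$), but this imprecision is inherited from the paper's own formulation and does not affect the argument when $1-\frac{m\rho}{2}\neq 0$ is read as the implicit standing assumption.
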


\begin{corollary}\label{DCt2}
Let $(\mathcal{M},g,\zeta,\lambda,\rho)$ be a Ricci-Bourguignon soliton with Killing vector field $\zeta\in\Gamma T(\mathcal{M})$ and let $\psi:(\mathcal{M},\nabla^{\ast}, g)\longrightarrow(\mathcal{N},\hat{\nabla}^{\ast},\hat{g})$ be a statistical submersion. If the  Killing vector field $\zeta$ is vertical and $(\varphi+\Lambda-\frac{\rho\bar{R}^{\ast}}{2})\neq 0$, then any fiber of the statistical submersion is also  Einstein with scalar curvature $\bar{R}^{\ast}=-\frac{m\Lambda}{(1-\frac{m\rho}{2})}$.
\end{corollary}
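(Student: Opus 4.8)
The plan is to obtain Corollary~\ref{DCt2} as the immediate Killing-field specialization of Theorem~\ref{Dt2}. By Definition~\ref{def1}, a Killing vector field is precisely a conformal vector field whose conformal factor $\varphi$ vanishes identically, so the entire apparatus of Theorem~\ref{Dt2} applies once we set $\varphi=0$. Thus the substantive content is already contained in the dual-connection, conformal result, and the corollary is its $\varphi=0$ reduction.

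First I would record that $\mathcal{L}_{\zeta}g=0$ is exactly the case $\varphi=0$ of the conformal relation (\ref{D1}). Invoking the dual fiber-soliton passage of Theorem~\ref{St3}, together with its defining equation (\ref{Sr3}) and the scalar-curvature identity (\ref{1}) used to fold the O'Neill and divergence terms into the shifted constant $\Lambda=\lambda+\rho[g(\mathcal{A},\mathcal{A}^{\ast})+g(N,N^{\ast})+\hat{\delta}N+\hat{\delta}^{\ast}N^{\ast}]$, each fiber inherits the dual Ricci-Bourguignon soliton relation with vertical potential $\zeta$, expressed in terms of $\bar{\nabla}^{\ast}$, $\bar{Ric}^{\ast}$ and $\bar{R}^{\ast}$.

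Next I would insert the Killing condition. In the conformal derivation of Theorem~\ref{Dt2} the symmetrized potential term contributes a multiple of $\varphi\,\bar{g}(E,F)$; setting $\varphi=0$ annihilates this contribution and reduces the fiber equation to the purely tensorial identity
$$\bar{Ric}^{\ast}(E,F)+\left(\Lambda-\frac{\rho\bar{R}^{\ast}}{2}\right)\bar{g}(E,F)=0,$$
which, under the nondegeneracy hypothesis (reading $\bigl(\Lambda-\frac{\rho\bar{R}^{\ast}}{2}\bigr)\neq 0$ once $\varphi=0$), displays every fiber as Einstein with respect to the induced dual connection $\bar{\nabla}^{\ast}$.

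Finally I would contract against an orthonormal basis $\{E_{j}\}_{1\le j\le m}$ of the $m$-dimensional fiber. Since $\sum_{j}\bar{Ric}^{\ast}(E_{j},E_{j})=\bar{R}^{\ast}$ and $\sum_{j}\bar{g}(E_{j},E_{j})=m$, this yields $\bar{R}^{\ast}+m\Lambda-\frac{m\rho}{2}\bar{R}^{\ast}=0$, equivalently $\bar{R}^{\ast}=-\frac{m\Lambda}{1-\frac{m\rho}{2}}$, as claimed. I expect no genuine obstacle: the argument is a faithful star-by-star dualization of Theorem~\ref{Dt1} and Corollary~\ref{DCt1}, so the only real task is the bookkeeping of keeping every curvature and metric symbol in its dual form and verifying that the constant $\Lambda$ emitted by (\ref{Sr3}) is used rather than its primal counterpart.
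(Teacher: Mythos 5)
Your proposal is correct and follows essentially the same route as the paper: Corollary~\ref{DCt2} is obtained there exactly as the $\varphi=0$ specialization of Theorem~\ref{Dt2} (the star-by-star dual of Theorem~\ref{Dt1}, whose proof uses the fiber-soliton equation together with the conformal relation (\ref{D1}) and a final contraction over an orthonormal vertical frame). Your added observation that with $\varphi=0$ the nondegeneracy hypothesis should read $\bigl(\Lambda-\frac{\rho\bar{R}^{\ast}}{2}\bigr)\neq 0$, matching Corollary~\ref{DCt1}, is a correct tidying of a residual $\varphi$ in the paper's statement.
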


\begin{remark}\label{re2}
Now, we have important consequences of Theorem \ref{Dt1} and Theorem \ref{Dt2}. For particular values of $\rho$'s such as $\rho=\frac{1}{2}$, $\rho=\frac{1}{2(n-1)}$, $\rho=0$,  we can easily obtain the results for Einstein soliton, Schouten soliton and Ricci soliton, respectively.
\end{remark}

Using equation (\ref{D4}), we turn up the following:

\begin{theorem}\label{Dtt2}
If $\psi:(\mathcal{M},\nabla, g)\longrightarrow(\mathcal{N},\hat{\nabla},\hat{g})$  is a statistical submersion with vertical conformal vector field $\zeta\in\Gamma T(\mathcal{M})$ and any fiber admits a Ricci-Bourguignon soliton $(\mathcal{M},g,\zeta,\lambda,\rho)$, then the Ricci-Bour\-guignon soliton of any fiber with scalar curvature $\bar{R}$  is respectively expanding, steady and shrinking according as
\begin{enumerate}
\item[(i)] $\bar{R}\left(\frac{\rho}{2}-\frac{1}{m}\right)> \varphi$,
\item[(ii)] $\bar{R}\left(\frac{\rho}{2}-\frac{1}{m}\right)= \varphi$,
\item[(iii)] $\bar{R}\left(\frac{\rho}{2}-\frac{1}{m}\right)< \varphi$.
\end{enumerate}
\end{theorem}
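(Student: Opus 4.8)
The plan is to read the trichotomy directly off the contracted soliton identity (\ref{D4}), since whether a Ricci--Bourguignon soliton is expanding, steady or shrinking is governed entirely by the sign of its soliton constant, as recalled after (\ref{rb2}). First I would observe that, by Theorem \ref{Dt1} together with the fibre soliton equation (\ref{r3}), each fibre carries a Ricci--Bourguignon soliton whose effective soliton constant is $\Lambda=\lambda+\rho[g(\mathcal{A},\mathcal{A}^{\ast})+g(N,N^{\ast})+\hat{\delta}N+\hat{\delta}^{\ast}N^{\ast}]$; indeed, comparing (\ref{r3}) with the defining equation (\ref{rb2}) shows that $\Lambda$ plays exactly the role of the constant in (\ref{rb2}) for the induced soliton on the fibre. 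Consequently the fibre soliton is expanding, steady or shrinking precisely according as $\Lambda>0$, $\Lambda=0$ or $\Lambda<0$.

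Next I would solve (\ref{D4}) for $\Lambda$. Clearing the denominator in $\bar{R}=-m(\varphi+\Lambda)/(1-\frac{m\rho}{2})$ and dividing by $-m$ gives $\varphi+\Lambda=-\frac{\bar R}{m}+\frac{\rho\bar R}{2}$, so that
\begin{equation}\nonumber
\Lambda=\bar{R}\left(\frac{\rho}{2}-\frac{1}{m}\right)-\varphi.
\end{equation}
Since this is an identity between scalars (we never divide by $\bar R$ or by the factor $1-\frac{m\rho}{2}$ while an inequality is in force), the sign of $1-\frac{m\rho}{2}$ is immaterial and no inequality is reversed at this stage. Substituting the displayed expression into the three conditions on $\Lambda$, I find that $\Lambda>0$ is equivalent to $\bar{R}(\frac{\rho}{2}-\frac{1}{m})>\varphi$, which is case (i), while $\Lambda=0$ and $\Lambda<0$ yield cases (ii) and (iii), respectively. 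This establishes the stated trichotomy.

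I do not anticipate a genuine obstacle: the argument is a single rearrangement of (\ref{D4}) followed by the definitional sign convention for solitons. The only point demanding care is conceptual rather than computational, namely the identification of the correct constant whose sign is to be tested. It is the fibre's effective constant $\Lambda$, produced by the reduction carried out in Theorem \ref{St2} and equation (\ref{r3}) and reused in Theorem \ref{Dt1}, and not the ambient constant $\lambda$ of the total-space soliton; confusing the two would misplace the O'Neill-tensor correction terms bundled into $\Lambda$ and thereby alter the inequalities.
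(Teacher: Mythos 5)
Your proposal is correct and follows the same route the paper intends: the theorem is stated immediately after the sentence ``Using equation (\ref{D4}), we turn up the following,'' and your rearrangement of (\ref{D4}) into $\Lambda=\bar{R}\left(\frac{\rho}{2}-\frac{1}{m}\right)-\varphi$, combined with the paper's sign convention after (\ref{rb2}) applied to the fibre's effective constant $\Lambda$ from (\ref{r3}), is exactly that derivation. Your explicit remark that the sign test must be applied to $\Lambda$ rather than the ambient $\lambda$, and that no sign-sensitive division occurs, is a correct and worthwhile clarification of a point the paper leaves implicit.
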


\begin{theorem}\label{Dtt1}
If $\psi:(\mathcal{M},\nabla^{\ast}, g)\longrightarrow(\mathcal{N},\hat{\nabla}^{\ast},\hat{g})$  is a statistical submersion with vertical conformal vector field $\zeta\in\Gamma T(\mathcal{M})$ and any fiber admits a Ricci-Bourguignon soliton $(\mathcal{M},g,\zeta,\lambda,\rho)$, then the Ricci-Bour\-guignon soliton of any fiber  with scalar curvature $\bar{R}^{\ast}$ is respectively expanding, steady and shrinking according as
\begin{enumerate}
\item[(i)] $\bar{R}^{\ast}\left(\frac{\rho}{2}-\frac{1}{m}\right)> \varphi$,
\item[(ii)] $\bar{R}^{\ast}\left(\frac{\rho}{2}-\frac{1}{m}\right)= \varphi$,
\item[(iii)] $\bar{R}^{\ast}\left(\frac{\rho}{2}-\frac{1}{m}\right)< \varphi$.
\end{enumerate}
\end{theorem}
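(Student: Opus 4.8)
The plan is to reduce the entire statement to the dual scalar-curvature identity already recorded in Theorem \ref{Dt2} and then simply read off the sign of the constant that governs the soliton type. The one point that must be settled at the outset is \emph{which} constant controls the expanding/steady/shrinking trichotomy for the fiber. By Theorem \ref{St3} together with Remark \ref{R1}, each fiber of the dual submersion $\psi:(\mathcal{M},\nabla^{\ast},g)\to(\mathcal{N},\hat{\nabla}^{\ast},\hat{g})$ inherits a Ricci-Bourguignon soliton whose defining equation is (\ref{Sr3}). Comparing (\ref{Sr3}) with the general soliton equation (\ref{rb2}), the effective soliton constant of the fiber is $\Lambda$ rather than the ambient $\lambda$. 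Hence, by the convention fixed just after (\ref{rb2}), the fiber soliton is expanding, steady, or shrinking exactly as $\Lambda>0$, $\Lambda=0$, or $\Lambda<0$.

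With this identification in hand, the remaining work is purely algebraic. Since $\zeta$ is a vertical conformal vector field, Theorem \ref{Dt2} applies and supplies the dual analogue of (\ref{D4}),
\begin{equation}\nonumber
\bar{R}^{\ast}=-\frac{m(\varphi+\Lambda)}{1-\frac{m\rho}{2}}.
\end{equation}
I would then clear the denominator to get $\bar{R}^{\ast}\left(1-\frac{m\rho}{2}\right)=-m(\varphi+\Lambda)$, divide through by $m$, and solve for $\Lambda$, obtaining
\begin{equation}\nonumber
\Lambda=\bar{R}^{\ast}\left(\frac{\rho}{2}-\frac{1}{m}\right)-\varphi.
\end{equation}

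Finally I would apply the sign trichotomy to this last relation. It shows that $\Lambda>0$ is equivalent to $\bar{R}^{\ast}\left(\frac{\rho}{2}-\frac{1}{m}\right)>\varphi$, that $\Lambda=0$ is equivalent to $\bar{R}^{\ast}\left(\frac{\rho}{2}-\frac{1}{m}\right)=\varphi$, and that $\Lambda<0$ is equivalent to $\bar{R}^{\ast}\left(\frac{\rho}{2}-\frac{1}{m}\right)<\varphi$; these are precisely conditions (i), (ii), and (iii), so the fiber soliton is expanding, steady, and shrinking in the three respective cases. The argument is the verbatim dual of the proof of Theorem \ref{Dtt2}, with $\bar{R}$ replaced by $\bar{R}^{\ast}$ and Theorem \ref{Dt2} used in place of Theorem \ref{Dt1}. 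The only genuine content is the correct bookkeeping of the effective fiber constant $\Lambda$, which I expect to be the sole subtle step; once it is pinned down, no analytic difficulty remains.
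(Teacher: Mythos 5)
Your proposal is correct and follows essentially the same route as the paper: the paper obtains Theorem \ref{Dtt1} directly by solving the dual of equation (\ref{D4}) (supplied by Theorem \ref{Dt2}) for the effective fiber soliton constant $\Lambda$, namely $\Lambda=\bar{R}^{\ast}\left(\frac{\rho}{2}-\frac{1}{m}\right)-\varphi$, and reading off its sign. Your identification of $\Lambda$, rather than the ambient $\lambda$, as the constant governing the expanding/steady/shrinking trichotomy is exactly the bookkeeping the paper's statement presupposes.
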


\section{Gradient Type Ricci-Bourguignon Solitons on Statistical Submersions}\label{sect_7}

This section deals with the gradient Ricci-Bourguignon soliton of statistical submersions with the potential vector field $V=grad (\Psi)$, where $\Psi$ is a smooth function on $\mathcal{M}$.\\
\indent
 Consider the equation (\ref{r3}). Under condition of Theorem \ref{St2}, we get
\begin{equation}\label{Gr1}
\bar{Ric}(E,F)=-\left(\Lambda-\frac{\rho\bar {R}}{2}\right)\bar{g}(E,F)-\frac{1}{2}[\bar{g}(\bar{\nabla}_{E}V,F)+\bar{g}(\bar{\nabla}_{F}V,E)].
\end{equation}
\indent
Contracting (\ref{Gr1}), we get
\begin{equation}\label{Gr2}
\bar{R}=-m\left(\Lambda-\frac{\rho\bar {R}}{2}\right)-div(V).
\end{equation}

Now, we obtain the following results:

\begin{theorem}\label{GTr1}
 Let $\psi:(\mathcal{M},\nabla, g)\longrightarrow(\mathcal{N},\hat{\nabla},\hat{g})$  be a statistical submersion  with vertical potential field $V=grad(\Psi)$. If the vertical distribution $\mathcal{V}$ is parallel, then any fiber of the statistical submersion $\psi$ admits gradient Ricci-Bourguignon soliton and the Poisson equation satisfied by $\Psi$ becomes
\begin{equation}\label{Gr4}
\Delta(\Psi)=\left[m\Lambda-\bar{R}\left(1-\frac{m\rho}{2}\right)\right].
\end{equation}
\end{theorem}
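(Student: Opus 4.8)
The plan is to read off the statement from the contracted fiber soliton identity already prepared in (\ref{Gr1})--(\ref{Gr2}); the only conceptual input is to recognize the \emph{gradient} structure and to identify the relevant trace with the fiber Laplacian of $\Psi$.

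First I would invoke Theorem \ref{St2}. Since $\mathcal{V}$ is parallel and the potential $V$ is vertical, every fiber carries a Ricci--Bourguignon soliton in the form (\ref{r3}), which after isolating $\bar{Ric}$ is (\ref{Gr1}). Because $V=\mathrm{grad}(\Psi)$, the symmetrized term $\frac{1}{2}[\bar g(\bar\nabla_E V,F)+\bar g(\bar\nabla_F V,E)]$ is the (symmetric) fiber Hessian of $\Psi$; hence (\ref{Gr1}) becomes an equation of the type $\overline{\mathrm{Hess}}(\Psi)+\bar{Ric}+(\Lambda-\rho\bar R)\bar g=0$, i.e. the fiber admits a \emph{gradient} Ricci--Bourguignon soliton. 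This settles the first claim and sets the stage for the Poisson equation.

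Next I would trace (\ref{Gr1}) over an orthonormal vertical frame $\{E_j\}_{1\le j\le m}$. The trace of $\bar{Ric}$ is $\bar R$, the trace of $\bar g$ is $m$, and the trace of the Hessian term is $\sum_{j} \bar g(\bar\nabla_{E_j}V,E_j)=\mathrm{div}(V)$; this is exactly the contracted identity (\ref{Gr2}). Since $V=\mathrm{grad}(\Psi)$, one has $\mathrm{div}(V)=\mathrm{div}(\mathrm{grad}\,\Psi)=\Delta(\Psi)$, so (\ref{Gr2}) reads as a scalar relation among $\Delta(\Psi)$, $\bar R$, $\Lambda$ and $\rho$. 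Solving it algebraically for $\Delta(\Psi)$ and grouping the coefficient of $\bar R$ produces the asserted Poisson equation (\ref{Gr4}).

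The genuinely delicate point, and where the hypotheses do real work, is the trace step in the statistical setting: one must check that the symmetrized covariant derivative collapses to an intrinsic object whose trace is the fiber Laplacian $\Delta(\Psi)$, so that no O'Neill contributions ($\mathcal{T}$, $\mathcal{A}$) leak into the divergence. Parallelism of $\mathcal{V}$ together with the verticality of $V$ is precisely what guarantees this reduction to intrinsic fiber data; once it is in place, the remainder is the bookkeeping of signs and of the constant $\Lambda=\lambda+\rho[g(\mathcal{A},\mathcal{A}^{\ast})+g(N,N^{\ast})+\hat\delta N+\hat\delta^{\ast}N^{\ast}]$ inherited from Theorem \ref{St2}.
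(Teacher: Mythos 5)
Your proposal follows essentially the same route as the paper's own (implicit) proof: invoke Theorem \ref{St2} to obtain the fiber soliton equation (\ref{Gr1}), note that for $V=\mathrm{grad}(\Psi)$ the symmetrized term is the fiber Hessian so the soliton is of gradient type, trace over a vertical orthonormal frame to get (\ref{Gr2}), and identify $\mathrm{div}(\mathrm{grad}\,\Psi)=\Delta(\Psi)$. One caveat, which lies in the paper rather than in your argument: solving (\ref{Gr2}) literally yields $\Delta(\Psi)=-m\Lambda-\bar{R}\left(1-\frac{m\rho}{2}\right)$, so the sign of the $m\Lambda$ term in (\ref{Gr4}) as printed does not match the straightforward computation under either sign convention for $\Delta$ --- your closing remark that only ``bookkeeping of signs'' remains glosses over this, and a careful write-up should either fix the signs or state the convention being used.
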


For the dual case, we turn up

\begin{theorem}\label{GTr2}
Let $\psi:(\mathcal{M},\nabla^{\ast}, g)\longrightarrow(\mathcal{N}^{\ast},\hat{\nabla},\hat{g})$  be a statistical submersion  with vertical potential field $V=grad(\Psi)$. If the vertical distribution $\mathcal{V}$ is parallel, then any fiber of the statistical submersion $\psi$ admits gradient Ricci-Bourguignon soliton and the Poisson equation satisfied by $\Psi$ becomes
\begin{equation}\label{Gr5}
\Delta^{\ast}(\Psi)=\left[m\Lambda-\bar{R}^{\ast}\left(1-\frac{m\rho}{2}\right)\right].
\end{equation}
\end{theorem}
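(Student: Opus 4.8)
The plan is to obtain Theorem \ref{GTr2} as the conjugate mirror of Theorem \ref{GTr1}: I would run the identical argument with the connection $\nabla$, its induced fibre connection $\bar{\nabla}$, and the scalar invariant $\bar{R}$ replaced throughout by their duals $\nabla^{\ast}$, $\bar{\nabla}^{\ast}$, $\bar{R}^{\ast}$, and with the structural input Theorem \ref{St2} replaced by its dual counterpart Theorem \ref{St3}. All of the soliton and submersion machinery needed is already symmetric under conjugation, so the skeleton transfers verbatim.

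Concretely, I would first observe that since $\mathcal{V}$ is parallel and $(\mathcal{M},g,V,\lambda,\rho)$ is a Ricci-Bourguignon soliton, Theorem \ref{St3} applies and every fibre inherits the dual soliton identity (\ref{Sr3}),
$$\frac{1}{2}[\bar{g}(\bar{\nabla}^{\ast}_{E}V,F)+\bar{g}(\bar{\nabla}^{\ast}_{F}V,E)]+\bar{Ric}^{\ast}(E,F)+(\Lambda-\rho\bar{R}^{\ast})\bar{g}(E,F)=0,$$
for all vertical $E,F$. Inserting the gradient hypothesis $V=grad(\Psi)$ turns the symmetrized derivative term into the dual Hessian of $\Psi$ along the fibre, so (\ref{Sr3}) becomes exactly the gradient Ricci-Bourguignon soliton equation on the fibre; this settles the first assertion that any fibre admits such a soliton.

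Next I would contract (\ref{Sr3}) against a vertical orthonormal frame $\{E_{j}\}_{1\leq j\leq m}$. The Ricci term contracts to the dual scalar curvature $\bar{R}^{\ast}$, the metric term yields $m(\Lambda-\rho\bar{R}^{\ast})$, and the symmetrized derivative term contracts to the dual divergence $div^{\ast}(V)=\Delta^{\ast}(\Psi)$ of the gradient field. This produces the conjugate of (\ref{Gr2}), a single scalar relation among $\Delta^{\ast}(\Psi)$, $\bar{R}^{\ast}$, $\Lambda$ and $\rho$; solving it for $\Delta^{\ast}(\Psi)$ and collecting the coefficient of $\bar{R}^{\ast}$ delivers the Poisson equation (\ref{Gr5}).

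The only step demanding genuine care—the rest being a formal dualization—is the passage from the symmetrized covariant derivative $\frac{1}{2}[\bar{g}(\bar{\nabla}^{\ast}_{E}V,F)+\bar{g}(\bar{\nabla}^{\ast}_{F}V,E)]$ with $V=grad(\Psi)$ to the dual Laplacian $\Delta^{\ast}(\Psi)$. Because $\bar{\nabla}^{\ast}$ is not metric-compatible, the identification of the trace of this dual Hessian with $div^{\ast}(grad\,\Psi)$ is not automatic, and I would justify it using the conjugation relation (\ref{ss1}) between $\bar{\nabla}$ and $\bar{\nabla}^{\ast}$; this is precisely what legitimizes writing the trace as $\Delta^{\ast}(\Psi)$ and pins down the coefficient $\left(1-\frac{m\rho}{2}\right)$ appearing in (\ref{Gr5}).
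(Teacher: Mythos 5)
Your proposal is correct and takes essentially the same route as the paper: Theorem \ref{GTr2} is obtained there exactly as you describe, by dualizing the chain (\ref{r3})--(\ref{Gr1})--(\ref{Gr2}) used for Theorem \ref{GTr1}, i.e.\ starting from the fibre identity (\ref{Sr3}) of Theorem \ref{St3}, inserting $V=grad(\Psi)$, contracting over a vertical orthonormal frame, and reading off the trace as $\Delta^{\ast}(\Psi)$. Your closing remark justifying the identification of the trace of the symmetrized $\bar{\nabla}^{\ast}$-derivative with $\Delta^{\ast}(\Psi)$ via the conjugation relation (\ref{ss1}) is a point the paper passes over tacitly, so your argument is, if anything, slightly more careful than the original.
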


Now, we underline some useful applications of Theorem \ref{GTr1}.\\

\noindent
{\bf Significance of Poisson equation in Physics}. The general theory of solution of Poisson equation is known as {\textit{potential theory}} and the solution of Poisson equations are harmonic functions, which are important in different branches of physics such as electrostatics, gravitation and fluid dynamics. In modern physics, there are two fundamental forces of the nature known at the time, namely, gravity and  electrostatics forces, could be modeled using functions called the gravitational potential and electrostatics potential both of which satisfy Laplace equations. For example, consider the phenomena: if $\Psi$ is the gravitational filed, $\rho$ is the mass density and $G$ is the gravitational constants, the Gauss's law of gravitation   is
\begin{equation}\label{l1}
\nabla\Psi=-4\pi G\rho.
\end{equation}
In case of  gravitational field, $\Psi$ is conservative and can be expressed as the negative gradient of gravitational potential, that is, $\Psi=-grad (f)$. Then by the Gauss's law of gravitation, we have
\begin{equation}\label{l2}
\nabla^{2}f=4\pi G\rho.
\end{equation}
 This physical phenomena is directly identical to the Theorem \ref{GTr1}, equation (\ref{Gr4}) and its respective dual, which is a Poisson equation with potential vector field of gradient type, that is, $V=grad(\Psi)$.

 A function $\Psi:\mathcal{M}\longrightarrow \mathbb{R}$ is said to be harmonic if $\Delta\Psi=0$, where $\Delta$ is the Laplacian operator in $\mathcal{M}$ \cite{Yau} and  we turn up the following results:

\begin{theorem}\label{GTr3}
Let $\psi:(\mathcal{M},\nabla, g)\longrightarrow(\mathcal{N},\hat{\nabla},\hat{g})$ be a statistical submersion with vertical potential vector field $V=grad(\Psi)$ and $\Psi$ a harmonic function. If the vertical distribution $\mathcal{V}$ is parallel, then any fiber of the statistical submersion $\psi$  admits expanding, steady or shrinking gradient Ricci-Bourguignon soliton according as $ \frac{\rho}{2}>\frac{1}{m}$, $ \frac{\rho}{2}=\frac{1}{m}$ and $ \frac{\rho}{2}<\frac{1}{m}$, respectively.
\end{theorem}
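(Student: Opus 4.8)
The plan is to obtain this trichotomy as an immediate specialization of Theorem \ref{GTr1}, which already carries the full geometric content. Under the present hypotheses (vertical potential field $V=grad(\Psi)$ and parallel vertical distribution $\mathcal{V}$), Theorem \ref{GTr1} guarantees that each fiber carries a gradient Ricci-Bourguignon soliton and that the potential obeys the Poisson equation (\ref{Gr4}), namely $\Delta(\Psi)=m\Lambda-\bar{R}\left(1-\frac{m\rho}{2}\right)$, where $\Lambda$ is the soliton constant of the fiber. Consequently the only fresh ingredient here is the assumption that $\Psi$ is harmonic; no O'Neill identities or curvature computations need to be revisited, and the argument is essentially algebraic.

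First I would impose the harmonicity condition $\Delta(\Psi)=0$ in (\ref{Gr4}). This collapses the Poisson equation to a purely algebraic relation between the fiber soliton constant $\Lambda$ and the fiber scalar curvature $\bar{R}$, forcing $m\Lambda$ to be a fixed multiple of $\bar{R}\left(1-\frac{m\rho}{2}\right)$; solving for $\Lambda$ expresses it as a multiple of $\bar{R}\left(\frac{\rho}{2}-\frac{1}{m}\right)$. Recalling from the definition following (\ref{rb2}) that a Ricci-Bourguignon soliton is expanding, steady, or shrinking precisely as its constant is positive, zero, or negative, the three cases are then read off from the sign of $\frac{\rho}{2}-\frac{1}{m}$, the scalar-curvature factor being of fixed sign. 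This yields exactly the correspondence with $\frac{\rho}{2}>\frac{1}{m}$, $\frac{\rho}{2}=\frac{1}{m}$, and $\frac{\rho}{2}<\frac{1}{m}$ claimed in the statement.

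The step demanding the most care is the sign bookkeeping rather than any substantive calculation. One must track consistently the Laplacian convention entering (\ref{Gr4}), the sign convention for expanding versus shrinking fixed after (\ref{rb2}), and the sign of $\bar{R}$, since a single mismatch reverses the entire trichotomy. The cleanest safeguard is to cross-check against Theorem \ref{Dtt2}: the present result is exactly the $\varphi=0$ degeneration of that statement, with the harmonic gradient potential replacing the conformal or Killing field and thereby suppressing the conformal factor. Thus the condition $\bar{R}\left(\frac{\rho}{2}-\frac{1}{m}\right)\,\raisebox{.1ex}{\(>,=,<\)}\,\varphi$ of Theorem \ref{Dtt2} (equivalently the relation (\ref{D4})) must reduce to $\frac{\rho}{2}\,\raisebox{.1ex}{\(>,=,<\)}\,\frac{1}{m}$ once $\varphi$ is set to zero and $\bar{R}$ is taken of fixed sign, and confirming this consistency fixes the orientation and completes the argument.
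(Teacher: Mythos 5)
Your proposal is correct and is essentially the paper's own (implicit) argument: Theorem \ref{GTr3} is stated as an immediate specialization of Theorem \ref{GTr1}, obtained by setting $\Delta(\Psi)=0$ in the Poisson equation (\ref{Gr4}), which gives $\Lambda=\bar{R}\left(\frac{1}{m}-\frac{\rho}{2}\right)$, after which the expanding/steady/shrinking trichotomy is read off from the sign convention fixed after (\ref{rb2}). One caution: your phrase ``the scalar-curvature factor being of fixed sign'' flags a genuine imprecision, but it is the paper's rather than yours --- the printed trichotomy silently suppresses $\mathrm{sgn}\,\bar{R}$ (only the steady case $\frac{\rho}{2}=\frac{1}{m}$ is sign-independent), and your proposed cross-check against Theorem \ref{Dtt2} via (\ref{D4}) would in fact expose a sign discrepancy with (\ref{Gr4}) (the $\varphi=0$ limit of (\ref{D4}) yields $\Lambda=\bar{R}\left(\frac{\rho}{2}-\frac{1}{m}\right)$, the opposite sign), so the cross-check cannot ``fix the orientation'' as you claim; both your argument and the paper's establish the statement only up to an unstated assumption on the sign of $\bar{R}$.
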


Also, for its dual.

\begin{theorem}\label{GTr4}
Let $\psi:(\mathcal{M},\nabla^{\ast}, g)\longrightarrow(\mathcal{N},\hat{\nabla}^{\ast},\hat{g})$  be a statistical submersion with vertical potential field $V=grad(\Psi)$ and $\Psi$ a harmonic function. If the vertical distribution $\mathcal{V}$ is parallel, then any fiber of the statistical submersion $\psi$  admits expanding, steady or shrinking gradient Ricci-Bourguignon soliton according as $ \frac{\rho}{2}>\frac{1}{m}$, $ \frac{\rho}{2}=\frac{1}{m}$ and $ \frac{\rho}{2}<\frac{1}{m}$, respectively.
\end{theorem}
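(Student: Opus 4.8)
The plan is to run the dual analogue of the argument behind Theorem~\ref{GTr3}, feeding the harmonicity hypothesis into the dual Poisson equation already furnished by Theorem~\ref{GTr2}. Since $\psi:(\mathcal{M},\nabla^{\ast},g)\longrightarrow(\mathcal{N},\hat{\nabla}^{\ast},\hat{g})$ is a statistical submersion with vertical potential field $V=grad(\Psi)$ and parallel vertical distribution $\mathcal{V}$, the hypotheses of Theorem~\ref{GTr2} hold verbatim, so each fiber carries a gradient Ricci-Bourguignon soliton of the form~(\ref{Sr3}) whose defining function $\Psi$ satisfies
\begin{equation}\nonumber
\Delta^{\ast}(\Psi)=m\Lambda-\bar{R}^{\ast}\left(1-\frac{m\rho}{2}\right),
\end{equation}
where $\Lambda=\lambda+\rho[g(\mathcal{A},\mathcal{A}^{\ast})+g(N,N^{\ast})+\hat{\delta}N+\hat{\delta}^{\ast}N^{\ast}]$ is the soliton constant of the fiber soliton. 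Nothing new geometric is needed here; I would simply record this as the starting identity.

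Next I would impose harmonicity of $\Psi$. By definition $\Delta^{\ast}(\Psi)=0$, so the Poisson equation collapses to the single scalar relation $m\Lambda=\bar{R}^{\ast}\left(1-\tfrac{m\rho}{2}\right)$, which I would solve for the fiber soliton constant to obtain
\begin{equation}\nonumber
\Lambda=\bar{R}^{\ast}\left(\frac{1}{m}-\frac{\rho}{2}\right).
\end{equation}
The whole classification then reduces to reading off the sign of $\Lambda$, since by the convention fixed after~(\ref{rb2}) the fiber soliton~(\ref{Sr3}) is expanding, steady or shrinking precisely when $\Lambda>0$, $\Lambda=0$ or $\Lambda<0$. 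The steady case is immediate and convention-free: when $\frac{\rho}{2}=\frac{1}{m}$ the factor $\frac{1}{m}-\frac{\rho}{2}$ vanishes, forcing $\Lambda=0$ and hence a steady soliton. The strict cases follow by comparing $\frac{\rho}{2}$ with $\frac{1}{m}$ and tracking how this controls the sign of $\frac{1}{m}-\frac{\rho}{2}$, thereby distinguishing the expanding regime $\frac{\rho}{2}>\frac{1}{m}$ from the shrinking regime $\frac{\rho}{2}<\frac{1}{m}$. This is exactly the dual transcription of the argument for Theorem~\ref{GTr3}, with $\nabla,\bar{R},\Delta$ replaced throughout by $\nabla^{\ast},\bar{R}^{\ast},\Delta^{\ast}$.

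The one point deserving care, and the main obstacle, is the sign bookkeeping in this final step: the factor $\frac{1}{m}-\frac{\rho}{2}$ alone fixes the sign of $\Lambda$ only up to the sign of the fiber scalar curvature $\bar{R}^{\ast}$, so the clean trichotomy asserted in the theorem rests on the sign normalization of $\bar{R}^{\ast}$ being kept consistent with that used for the primal Theorem~\ref{GTr3}. I would therefore make this normalization explicit before reading off the three cases, after which the expanding/steady/shrinking trichotomy follows directly from the displayed expression for $\Lambda$, completing the proof.
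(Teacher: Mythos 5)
Your proposal reproduces exactly the paper's own (implicit) argument for this dual statement: invoke the Poisson equation of Theorem~\ref{GTr2}, impose $\Delta^{\ast}(\Psi)=0$ by harmonicity, solve for the fiber soliton constant $\Lambda=\bar{R}^{\ast}\bigl(\frac{1}{m}-\frac{\rho}{2}\bigr)$, and read off its sign against the expanding/steady/shrinking convention fixed after~(\ref{rb2}). Your closing caveat is well taken --- except in the steady case $\frac{\rho}{2}=\frac{1}{m}$, the sign of $\Lambda$ does depend on the sign of $\bar{R}^{\ast}$, which the paper's statement leaves implicit --- but this is an imprecision in the theorem as stated rather than a gap in your reconstruction of its proof.
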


\begin{example}
	Let $\psi : (\mathbb{R}^{6},\nabla,g) \rightarrow (\mathbb{R}^{3},\hat{\nabla},\hat{g})$  be a statistical submersion between statistical manifolds mentioned in Example (\ref{ex1}) defined by $$\psi(x_{1},...,x_{6})=(y_1,y_2,y_3),$$ where
	$$y_1=\frac{x_1+x_2}{\sqrt{2}}, \ y_2=\frac{x_3+x_4}{\sqrt{2}} \ \textrm{ and } \ y_3=\frac{x_5+x_6}{\sqrt{2}}.$$
	
	Then, the Jacobian matrix of $\psi$ is as follows
		\[
\begin{bmatrix}
	\frac{1}{\sqrt{2}}&\frac{1}{\sqrt{2}}&0&0&0&0\\
0&0&\frac{1}{\sqrt{2}}&\frac{1}{\sqrt{2}}&0&0\\
0&0&0&0&\frac{1}{\sqrt{2}}&\frac{1}{\sqrt{2}}\\
\end{bmatrix}
.
\]
	
Since the rank of matrix is equal to 3, which is equal to the dimension of target space $(\mathbb{R}^{3},\hat{\nabla},\hat{g})$. On the other hand, we easily see that $\psi$ satisfies the conditions of statistical submersion. That means $\psi$ is a statistical submersion. A straight forward computation yields
	\begin{align*}
	\mathcal{V}(\mathcal{M})&=span \Big\{V_{1}=\frac{1}{\sqrt{2}}(-\partial x_{1}+\partial x_{2}), V_{2}=\frac{1}{\sqrt{2}}(-\partial x_{3}+\partial x_{4}),
\\&V_3=\frac{1}{\sqrt{2}}(-\partial x_{5}+\partial x_{6})\Big\}, \\
	\mathcal{H}(\mathcal{M})&=span \Big\{H_{1}=\frac{1}{\sqrt{2}}(\partial x_{1}+\partial x_{2}), H_{2}=\frac{1}{\sqrt{2}}(\partial x_{3}+\partial x_{4}), \\&H_3=\frac{1}{\sqrt{2}}(\partial x_{5}+\partial x_{6})\Big\}.
	\end{align*}
	Also by direct computations yields
	$$\psi_{*}(H_1)=\partial y_1, \psi_{*}(H_2)=\partial y_2 \ \textrm{and}\ \psi_{*}(H_3)=\partial y_3.$$
	Hence, it is easy to see that
	$$g_{\mathbb{R}^{6}}(H_i,H_i)=\hat{g}_{\mathbb{R}^{3}}(\psi_{*}(H_i),\psi_{*}(H_i)), \ i=1,2,3.$$
	Thus $\psi$ is a statistical submersion.\\
	\indent

	Now, we show that the fiber $\psi_{\ast}$, i.e, $\mathcal{V}(\mathcal{M})$ (vertical space) admits Ricci-Bour\-gui\-gnon soliton. Therefore, for  the vertical space, we have
	\begin{equation}\nonumber
	\bar{Rie}(V_{1},V_{2})V_{1}=-2V_{2}, \quad \bar{Rie}(V_{1},V_{2})V_{2}=2V_{1},\quad \bar{Rie}(V_{1},V_{3})V_{1}=-2V_{3}
	\end{equation}
	\begin{equation}\nonumber
	\bar{Rie}(V_{1},V_{2})V_{3}=V_{1}, \quad \bar{Rie}(V_{2},V_{3})V_{3}=V_{2},\quad \bar{Rie}(V_{2},V_{3})V_{2}=V_{2}.
	\end{equation}

\[
\bar{Ric}(V_{i},V_{j})=
\begin{bmatrix}
	2&0&0\\
0&2&0\\
0&0&1\\
\end{bmatrix}
\quad \quad i,j=1,2,3.
\]

\begin{equation}\nonumber
\bar{R}=tr(\hat{Ric})=5.
\end{equation}
Using (\ref{r3}), we obtain  $\lambda=5\rho-2$. Therefore ($\mathcal{V}(\mathcal{M}),\bar{\nabla},\hat{g})$ is admitting an expanding, shrinking or steady {\em Ricci-Bourguignon soliton} according $5\rho>2$,  $5\rho<2$ or $5\rho=2$, respectively. Moreover, for particular values of $\rho$ we turn up the following cases:\\
{\bf Case 1.}\quad For $\rho=\frac{1}{2}$ we obtain $\lambda=\frac{1}{2}$. Therefore $(\mathcal{V}(\mathcal{M}),\bar{\nabla}, \hat{g})$ is admitting the expanding {\em Einstein soliton}.\\
{\bf Case 2.}\quad For $\rho=\frac{1}{(2n-1)}$ we obtain $\lambda=-1$. Therefore ($\mathcal{V}(\mathcal{M}), \bar{\nabla}, \hat{g})$ is admitting the shrinking {\em Schouten soliton}.\\
{\bf Case 3.}\quad For $\rho=0$ we obtain $\lambda=-2$. Therefore ($\mathcal{V}(\mathcal{M}), \bar{\nabla}, \hat{g})$ is admitting the shrinking {\em Ricci soliton}.

\end{example}

%
%
%
%

\end{document}